\begin{document}

\title[Generalized canonical rings]
{On twisted Kawamata's semi-positivity  and  finite generation of generalized canonical rings}

\author{Yoshinori  Gongyo}
\author{Shigeharu Takayama}

\address{Graduate School of Mathematical Sciences, 
the University of Tokyo, 3-8-1 Komaba, Meguro-ku, Tokyo 153-8914, Japan.}

\email{gongyo@ms.u-tokyo.ac.jp}
\email{taka@ms.u-tokyo.ac.jp}

\begin{abstract}We give a twisted version of the Kawamata semi-positivity theorem by the $\mathbb{Q}$-line bundle with a vanishing Lelong number at every point. Moreover, we apply the result to the finite generation problem for canonical rings of Birkar's generalized pairs. 

\end{abstract}

\makeatletter 
\@namedef{subjclassname@2020}{\textup{2020} Mathematics Subject Classification}
\subjclass[2020]{14D06, 14E30, 32J25}
\keywords{canonical ring, canonical bundle formula, Lelong number}


\date{\today}
\maketitle
\baselineskip=16pt

\theoremstyle{plain}
  \newtheorem{thm}{Theorem}[section]
  \newtheorem{thm'}{Theorem}[subsection]
  \newtheorem{prop}[thm]{Proposition}
  \newtheorem{prop'}[thm']{Proposition}
  \newtheorem{lem}[thm]{Lemma}
  \newtheorem{lem'}[thm']{Lemma}  
  \newtheorem{cor}[thm]{Corollary}
\theoremstyle{definition}  
  \newtheorem{dfn}[thm]{Definition}
  \newtheorem{dfn'}[thm']{Definition}  
  \newtheorem{exmp}[thm]{Example}
  \newtheorem{prob}[thm]{Problem}
  \newtheorem{notation}[thm]{Notation}
  \newtheorem{notation'}[thm']{Notation}
  \newtheorem{quest}[thm]{Question}
  \newtheorem{rem}[thm]{Remark}
  \newtheorem{rem'}[thm']{Remark}  
  \newtheorem*{acknowledgement}{Acknowledgement}
  \newtheorem{no}[thm]{}
  \newtheorem{no'}[thm']{}
  \newtheorem{setup}[thm]{Set up}
  \newtheorem{setup'}[thm']{Set up}  
\numberwithin{equation}{subsection}

\newcommand{\disp}{\displaystyle}

\newcommand{\BC}{{\mathbb{C}}}
\newcommand{\BN}{{\mathbb{N}}}
\newcommand{\BP}{{\mathbb{P}}}
\newcommand{\BQ}{{\mathbb{Q}}}
\newcommand{\BR}{{\mathbb{R}}}
\newcommand{\BZ}{{\mathbb{Z}}}
\newcommand{\BD}{{\mathbb{D}}}

\newcommand{\CA}{{\mathcal{A}}}
\newcommand{\CB}{{\mathcal{B}}}
\newcommand{\CE}{{\mathcal{E}}}
\newcommand{\CF}{{\mathcal{F}}}
\newcommand{\CG}{{\mathcal{G}}}
\newcommand{\CH}{{\mathcal{H}}}
\newcommand{\CI}{{\mathcal{I}}}
\newcommand{\CJ}{{\mathcal{J}}}
\newcommand{\CL}{{\mathcal{L}}}
\newcommand{\CM}{{\mathcal{M}}}
\newcommand{\CO}{{\mathcal{O}}}
\newcommand{\CQ}{{\mathcal{Q}}}
\newcommand{\CS}{{\mathcal{S}}}
\newcommand{\CU}{{\mathcal{U}}}
\newcommand{\cV}{{\mathcal{V}}}
\newcommand{\CY}{{\mathcal{Y}}}

\newcommand{\bfa}{\mathbf {a}}
\newcommand{\bfb}{\mathbf {b}}
\newcommand{\bfc}{\mathbf {c}}
\newcommand{\bfd}{\mathbf {d}}
\newcommand{\bfh}{\mathbf {h}}
\newcommand{\bfk}{\mathbf {k}}
\newcommand{\bfr}{\mathbf {r}}
\newcommand{\bft}{\mathbf {t}}
\newcommand{\bfx}{\mathbf {x}}
\newcommand{\bfy}{\mathbf {y}}
\newcommand{\bfz}{\mathbf {z}}
\newcommand{\bfo}{\mathbf {o}} 

\newcommand{\bfgc}{\mathbf {\gc}}
\newcommand{\bflam}{\mathbf {\lambda}}
\newcommand{\bfth}{\mathbf {\theta}}

\newcommand{\fm}{{\mathfrak {m}}}

\newcommand{\ga}{{\alpha}}
\newcommand{\gb}{{\beta}}
\newcommand{\gc}{{\gamma}}
\newcommand\ep{\varepsilon}
\newcommand{\Ga}{{\Gamma}}
\newcommand{\Del}{{\Delta}}
\newcommand{\Dl}{{\Delta}}
\newcommand{\del}{{\delta}}
\newcommand{\ka}{{\kappa}}
\newcommand{\lam}{{\lambda}}
\newcommand{\Lam}{{\Lambda}}
\newcommand{\vph}{{\varphi}}
\newcommand{\sg}{{\sigma}}
\newcommand{\Th}{{\Theta}}
\newcommand\w{\omega}
\newcommand\Om{\Omega}

\newcommand\nab{\nabla}

\newcommand\wtil{\widetilde}
\newcommand\what{\widehat}
\newcommand\tb{\widetilde b}
\newcommand\tB{\widetilde B}
\newcommand\tD{\widetilde D}
\newcommand\tE{\widetilde E}
\newcommand\tf{\widetilde f}
\newcommand\tF{\widetilde F}
\newcommand\tG{\widetilde G}
\newcommand\tH{\widetilde H}
\newcommand\tk{\widetilde k}
\newcommand\tL{\widetilde L}
\newcommand\tLam{\widetilde \Lam}
\newcommand\tN{\widetilde N}
\newcommand\tP{\widetilde P}
\newcommand\tR{\widetilde R}
\newcommand\tS{\widetilde S}
\newcommand\tu{{\widetilde u}}
\newcommand\tU{\widetilde U}
\newcommand\tv{\widetilde v}
\newcommand\tW{\widetilde W}
\newcommand\tw{\widetilde \w}

\newcommand\tx{\widetilde x}
\newcommand\tX{\widetilde X}

\newcommand\tCM{\widetilde \CM}
\newcommand\teta{\widetilde \eta}
\newcommand\ttau{\widetilde \tau}

\newcommand\sm{\setminus}
\newcommand\ol{\overline}
\newcommand\ot{\otimes}
\newcommand\wed{\wedge}

\newcommand\sumn{\sum\nolimits}

\newcommand\lra{\longrightarrow}
\newcommand\teigi{\Leftrightarrow}
\newcommand\Ra{\Rightarrow}
\newcommand\La{\Leftarrow}

\newcommand\da{\downarrow}

\newcommand\ai{\sqrt{-1}}
\newcommand\rd{{\partial}}
\newcommand\rdb{{\overline{\partial}}}

\newcommand\Amp{\mbox{{\rm Amp}}\, }
\newcommand\codim{\text{{\rm codim}}\, } 
\newcommand\cpt{{\rm cpt}}
\newcommand\diam{\mbox{{\rm diam}}\,}
\newcommand\divisor{\mbox{{\rm div}}\,}
\newcommand\Jac{\mbox{{\rm Jac}}\, }
\newcommand\NAmp{\mbox{{\rm NAmp}}\, }
\newcommand\Proj{\mbox{{\rm Proj}}\, }
\newcommand\pr{\mbox{{\rm pr}}\, }
\newcommand\rank{\mbox{{\rm rank}}\, }
\newcommand\Reg{{\rm Reg}\, }
\newcommand\reg{{\rm reg}}
\newcommand\SBs{\mbox{{\rm SBs}}\, }
\newcommand\Sing{{\rm Sing}\, }
\newcommand\supp{\mbox{{\rm supp}}\, }
\newcommand\Supp{\mbox{{\rm Supp}}\, }
\newcommand\Tr{\mbox{\rm Tr}}
\newcommand\Ty{\mbox{{\rm Ty}}}
\newcommand\vol{\mbox{{\rm vol}}}
\newcommand\Vol{\mbox{{\rm Vol}}\, }

\renewcommand\Re{\mbox{{\rm Re}}\, }

\newcommand{\ru}[1]{\ulcorner{#1}\urcorner}

\section{Introduction}

Throughout this paper, we work over $\mathbb{C}$, the complex number field.

In this paper, we prove the following theorem, which we  call "twisted Kawamata's semi-positivity theorem":

\begin{thm}[Twisted Kawamata's semi-positivity theorem]\label{tKsp}
Let $f: X \to Y$ be a surjective morphism of projective varieties with connected fibers. 
Suppose $X$ is quasi-smooth and $Y$ is smooth.
Let $P=\sum_{j} P_j$ and $Q=\sum_{l} Q_l$ be reduced effective divisors on $X$ and $Y$, respectively, 
such that $f^{-1}(Q)\subseteq P$. Suppose that $f$ is  a smooth morphism over $Y \setminus \mathrm{Supp}\,Q$

Let $M$ be a $\BQ$-divisor on $X$, which, as a $\mathbb{Q}$-line bundle, admits a singular Hermitian metric with semi-positive curvature and whose local weight functions have vanishing Lelong numbers at any points in $X$ (see \ref{defLe=0} for the definition). 
In particular, $M$ is nef.

Let $D=\sum_{j} d_jP_j$ be 
a $\mathbb Q$-divisor {\em{(}}$d_j$'s may 
be negative or zero{\em{)}}, which satisfies 
the following conditions (1)--(4){\em{:}}
\begin{itemize}
\item[(1)] 
$f:(X, P) \to (Y, Q)$ is a well prepared model ($f:(X, P) \to (Y, Q)$ is toroidal and $f$ is equi-dimensional, in fact $f$ is flat),
\item[(2)] $d_j<1$, 
\item[(3)] 
$\dim_{\mathbb C(\eta)}f_*\mathcal O_X(\ulcorner -D\urcorner)\otimes_{\mathcal O_Y}\mathbb C(\eta)=1$, 
where $\eta$ is the generic point of $Y$, and
\item[(4)] 
$K_X+D+M \sim_{\mathbb Q} f^*(K_Y+L)$ 
for some $\mathbb Q$-divisor $L$ on $Y$. 
\end{itemize}

Let {\rm (5)}
\begin{eqnarray} 
f^*Q_l &=& 
\sum_{j}w_{lj}P_j, \ \text{where}\ w_{lj} \in \BZ_{>0}, \nonumber\\
\bar{d}_j&=&\frac{d_j+w_{lj}-1}{w_{lj}} (\in \BQ_{<1}) \ \text{if}\ f(P_j)=Q_l, \nonumber\\
\delta_{l}&=& \mathrm{max}\{\bar{d_j}| f(P_j)=Q_l\}, \nonumber\\
\Delta_{Y}&=&\sum \delta_{l}Q_{l},\ \text{and} \nonumber\\
M_Y&=&L-\Delta_{Y}.\nonumber
\end{eqnarray}
By construction, the pair $(Y,\Del_Y)$ is sub-klt and $K_X+D+M\sim_\BQ f^*(K_Y+\Del_Y+M_Y)$. 
We sometimes call $M_Y$ {\em{(}}resp.~$\Delta_Y${\em{)}} 
the {\em{moduli part}} {\em{(}}resp.~{\em{discriminant 
part}}{\em{)}}.  

Then, the $\BQ$-divisor $M_Y$ admits, as a $\BQ$-line bundle, a singular Hermitian metric with semi-positive curvature and whose local weight functions have vanishing Lelong numbers at any points in $Y$. 
In particular, $M_Y$ is nef. 
\end{thm}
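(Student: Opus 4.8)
The plan is to reduce the statement to the classical Kawamata semi-positivity theorem (the case $M=0$) by absorbing the twisting metric into a weighted system of divisors to which Kawamata's variation-of-Hodge-structure argument applies, and then to verify the two metric conditions—semi-positivity of curvature and vanishing Lelong number—separately, the first being essentially formal and the second being the heart of the matter.

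First I would set up the Hodge-theoretic side. After a further log-resolution (which does not affect any of the hypotheses (1)--(5) in an essential way, by the standard birational behaviour of the moduli part) one may assume $X\to Y$ is a semistable-in-codimension-one family and that $P\cup(\text{support of }M)$ is simple normal crossing over the generic point. The numerical identity $K_X+D+M\sim_\BQ f^*(K_Y+\Delta_Y+M_Y)$ together with condition (3) says that $M_Y$ is, up to the discriminant correction, the pushforward of the relative log-canonical-plus-$M$ bundle; writing $\mathbf{b}$ for the smallest positive integer clearing all denominators, $bM_Y$ is the bottom piece of a Hodge-type filtration on the $b$-th fibre-power construction (or, equivalently, on a cyclic cover adapted to $D$). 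Here the point of the vanishing Lelong number hypothesis on $M$ is that it lets us treat $M$ as if it were a genuinely smooth semi-positive class for the purpose of $L^2$ estimates: a singular metric with vanishing Lelong numbers has trivial multiplier ideal after any $\BQ$-multiple, so $bD+\lceil\text{something}\rceil$ behaves, sheaf-theoretically, exactly as in the untwisted case.

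Next I would run the curvature/positivity argument. The cleanest route is the metric one: equip the relative bundle $\CO_X(-D)\ot M^{1/b}$-type sheaf with the metric induced by $M$'s metric (pulled back fibrewise), push forward, and use the Berndtsson–Păun–Takayama type semi-positivity of direct images of relatively positively curved bundles; subtracting the discriminant part $\Delta_Y$, which is exactly designed to cancel the polar contributions of the fibrewise metric along the bad locus, yields a singular metric on $M_Y$ with semi-positive curvature current on all of $Y$. Alternatively, one runs Kawamata's original argument verbatim with $M$ inserted, using that $M$ is nef (already noted) to get nefness of $M_Y$ as a formal consequence of the semi-positivity of the Hodge bundle plus the negativity of the discriminant correction; but the metric version is what gives the finer Lelong statement, so I would use that.

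The main obstacle—and the step I would spend the most care on—is the vanishing of the Lelong numbers of the resulting metric on $M_Y$ at every point of $Y$. Semi-positivity of the curvature is, as above, essentially automatic from pushforward positivity; but controlling Lelong numbers of a direct-image metric is delicate because fibre integration can create logarithmic poles, i.e. positive Lelong numbers, precisely along the discriminant. The strategy is: (i) show the metric on $M_Y$ is the sum of a fixed smooth form, the fibre-integrated metric coming from $M$ (which inherits vanishing Lelong numbers from $M$ because vanishing Lelong number is preserved under pullback by the smooth map $f$, under restriction to fibres, and under the fibrewise $L^2$ push-forward—this last using that a vanishing-Lelong-number weight is locally bounded above by $\ep\log|z|^{-1}$ for every $\ep$, so its fibre integral is too), plus (ii) the Hodge-theoretic part, whose only singularities are the unipotent-monodromy logarithmic poles along $Q$, which are then cancelled on the nose by the choice of $\Delta_Y$ via the very definition of $\bar d_j$ and $\delta_l$ in (5). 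Putting (i) and (ii) together, the weight of the metric on $M_Y$ near any point is bounded above by (bounded) $+\;\ep\log|z|^{-1}$ for every $\ep>0$, forcing the Lelong number to be $0$; nefness of $M_Y$ then follows, as stated, since a pseudo-effective class with a semi-positively curved singular metric of vanishing Lelong numbers is nef.
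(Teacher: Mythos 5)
Your high-level plan does overlap with the paper's actual route in one crucial respect: the paper also abandons the Hodge-theoretic/cyclic-cover approach of \cite{Ksub} in favour of a purely metric argument, constructing a relative Bergman kernel metric in the style of \cite{BP}, \cite{PT}, \cite{CH} and then descending it to $M_Y=L$. However, there are genuine gaps.

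First, the step you flag as ``the heart of the matter'' -- that vanishing Lelong numbers survive the fibrewise $L^2$ push-forward -- is precisely the content of the theorem and cannot be invoked as a known closure property of the class of vanishing-Lelong-number metrics. Your proposed justification is also incorrect: vanishing Lelong number at $x$ means $\liminf_{z\to x}\vph(z)/\log|z-x|=0$, which is a $\liminf$ statement and gives no pointwise upper bound of the form $\vph\ge -\ep\log|z-x|^{-1}$; it gives only local $L^p$-integrability of $e^{-\vph}$ for all $p$ (Skoda). The paper's proof (Theorem \ref{L1}) handles this with H\"older's inequality: one writes $I_U\le \|e^{-\vph}\|_{L^p}\cdot\|\,\text{(un-twisted integrand)}\,\|_{L^q}$ with $q=1+\ep$ close to $1$, and the cost of the exponent $q>1$ on the ``un-twisted'' factor must be absorbed using the slack in the coefficient inequalities $d_j<1$ and $-d_i-a_i\ge -1$ coming from $\Delta_Y=0$ (Steps 4--5 of the proof of \ref{L1}), plus the explicit toroidal local coordinates of the well-prepared model. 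None of this bookkeeping is addressed in your sketch; without it one does not even get that the fibre integral $F_\omega$ is locally integrable against $|T|^{-c}$ for $c<2$, which is what feeds into the criterion of Lemma \ref{nu0}.

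Second, your decomposition of the metric on $M_Y$ into (i) an ``$M$-part'' with vanishing Lelong numbers plus (ii) a ``Hodge part'' with log poles cancelled by $\Delta_Y$ does not match the structure of the problem. The discriminant $\Delta_Y$ is computed purely from the data $(f,D,f^*Q)$ in (5) and is independent of $M$ and its metric; in the paper the discriminant cancellation and the $e^{-\vph}$-factor are handled \emph{inside the same fibre integral}, and they interact (see the choice of $\ep$ in Step 4 so that $c(1+\ep)+2\wtil c\ep<2$ \emph{and} $d_i(1+\ep)<1$). The two contributions cannot be cleanly separated as you propose, and the ``unipotent monodromy'' picture is not present in the proof at all. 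Finally, note that the paper must also pass through a resolution to a smooth total space and compare fibre integrals on $X$ and $X'$ (Lemma \ref{equiv}), because the well-prepared model is only quasi-smooth and the Bergman kernel machinery of \cite{BP}, \cite{PT}, \cite{CH} is stated for manifolds; this comparison step is also missing from your outline.
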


\begin{rem}\label{rem_rel.nc}
In Theorem \ref{tKsp}, the toroidal conditions and the smoothness of $f$ over $Y \setminus \mathrm{Supp}\,Q$ imply that $\mathrm{Supp}\,D^h$ is relative normal crossing over $Y \setminus \mathrm{Supp}\,Q$ since the torus invariant boundaries of smooth toric varieties are simple normal crossings.
\end{rem}

The property (3) is, in fact, to say that $f_{*}\CO_{X} (\ru{-D})$ is a line bundle. When $M=0$ and $X$ is smooth, the above theorem follows from the traditional Kawamata's semi-positivity theorem in \cite{Ksub}. Compared with \cite{Ksub}, we add the extra assumption (1). The condition is required by analysis of the singularities of the metric on $M_Y$. However, applying the toroidalization theorem in \cite{AK}, we can always assume (1) with the birational modification. Then, we need to generalize the theorem for quasi-smooth and toroidal. Therefore, we state the above situation. The proof follows from the idea of the second author's paper \cite{Ta}. However, by the contribution of $M$, the estimation becomes more complicated than   \cite{Ta}. 

Next, using  twisted Kawamata's semi-positivity theorem \ref{tKsp}, we prove the following theorem:

\begin{thm}[{=Theorem \ref{fg-canonical}}]\label{fg-canonical-intro}
Let $(X, B+M)$ be a projective generalized klt pair such that a $\mathbb{Q}$-line bundle  $M'$ on a smooth higher birational model $X' \to X$ has a semi-positive singular Hermitian metric $h_{M'}$ with a vanishing  Lelong number for the potential at every point of $X'$. Assume that $l(K_X+B+M)$ is a Cartier divisor for an $l>0$. 

Then the ring 
$$ \bigoplus_{m \geq 0} H^0(X, ml(K_X+B+M))  
$$
is a finite generated $\mathbb{C}$-algebra. 
\end{thm}

We collect the notion of generalized pairs by Birkar in Section \ref{fg-section}. When $K_X+B+M$ is big, the above theorem directly follows from \cite{bchm} since we perturb $B+M$ with $K_X+B+M$ and produce a usual klt boundary $B'$ such that $B'\sim_{\mathbb{Q}} B+M+\varepsilon(K_X+B+M)$ for a small $\varepsilon>0$. 
Thus, we apply the Fujino--Mori argument \cite{fm}. 

Finally, we note the interesting difference between the usual and generalized pair in the Kawamata semi-positivity, the canonical bundle formula, and the finite generation. 
For generalized pairs, we do not know the functorial behavior of the moduli part of \cite[Theorem 0.2 (1)]{Ajdg}. 
For the case of (generalized) log Calabi--Yau fibrations, it is known by \cite{filipazzi}. 
However, in general, it is unknown. Fortunately, this does not matter for our application. Moreover, one may ask the case of log canonical in Theorem \ref{fg-canonical-intro}. However, the finite generation of lc generalized canonical rings does not hold by \cite[Remark 3.5]{gon}. Our result is also obtained in \cite[Theorem 1.1]{Kim2} when $M$ is trivial
and the horizontal part of $D$ is effective. After putting this paper on the arXiv, we know that  Hacon and P\u{a}un are working with the similar idea with us in \cite{hp}. They also treat various applications for the adjunction theory. 

\section*{Acknowledgement}
The first author was partially supported by grants JSPS KAKENHI $\#$18H01108, 19KK0345, 20H00111, 21H00970, 22H01112, 23H01064. The second author was partially supported by grants JSPS KAENHI $\#$21H00979, $\#$23K20792. 
The first author thanks Professor Paolo Cascini and Chi-Kang Chang for the discussion.

\section{Preliminary}

\subsection{Algebraic preliminary}

We organize some notations and preliminary discussions for our main argument.
Let everything be as in \ref{tKsp}.

\begin{notation'}\label{nota1}
(1)
We write, in general, as $D= D_{+} - D_{-}$ with the positive part $D_{+}=\sum_{d_{i}>0} d_{i}P_{i}$ and the negative part $D_{-}=\sum_{d_{i}<0} (-d_{i})P_{i}$ (without common components).
We have $D=D^{h}_{+} - D^{h}_{-} + D^{v}_{+} - D^{v}_{-}$.

(2)
We let $\Del_{D}=D_{+} + \ru{D_{-}} - D_{-}$. 
It is effective, and the pair $(X,\Del_{D})$ is klt by \ref{tKsp}\,(2). 
We see
$D = \Del_{D} - \ru{D_{-}}$, $\ru{-D} = \ru{D_{-}}$, and hence
$K_{X}+D = K_{X}+ \Del_{D} - \ru{D_{-}}$,
$K_{X/Y}+\Del_{D}+M \sim_{\BQ} \ru{D_{-}} + f^{*}L$.

If $D=\sum d_jP_j$ with $d_j\in \BQ$ in general, $(X,D)$ is sub-klt if and only if all $d_j<1$ ($V$-normal crossing support with coefficients $<1$),
\cite[5.20\,(4)]{komo}.
\end{notation'}

We recall the notion of singularities of pairs. 

\begin{dfn'}[Singularities of pairs] 
Let $X$ be a normal variety and let $\Delta$ be an $\mathbb{R}$-divisor 
on $X$ such that $K_X+\Delta$ is $\mathbb{R}$-Cartier. 
Let $\varphi:Y\rightarrow X$ be a log resolution of $(X,\Delta)$. 
We set $$K_Y=\varphi^*(K_X+\Delta)+\sum a_iE_i,$$ where $E_i$ is a 
prime divisor on $Y$ for every $i$.
The pair $(X,\Delta)$ is called 
\begin{itemize}
\item[(a)] \emph{sub Kawamata log terminal} $($\emph{subklt}, 
for short$)$ if $a_i > -1$ for all $i$, or
\item[(b)]\emph{sub log canonical} $($\emph{sublc}, for short$)$ if $a_i \geq -1$ for all $i$.
\end{itemize}
If $\Delta$ is effective and $(X, \Delta)$ is subklt (resp.~sublc), 
then we  simply call it \emph{klt} (resp.~\emph{lc}). 
\end{dfn'}

Let us recall, in general, a resolution of singularities theorem of morphisms due to Abramovich and Karu \cite{AK}.
(We refer \cite[Theorem 2]{Ka15}. See \cite[\S 1]{Ka15} for terminology.)

\begin{thm'}\label{wp}
Let $f: X\to Y$ be a surjective morphism of smooth projective varieties with connected fibers and $Z$ a closed subset of $X$.
Then there exists a quasi-smooth projective toroidal variety $(X',B')$, a smooth projective toroidal variety $(Y',C')$, a projective morphism $f':X'\to Y'$ with connected fibers and projective birational morphisms $v_{X}:X' \to X, v_{Y}:Y' \to Y$ such that $v_{Y}\circ f' = f\circ v_{X}$ and which satisfy the following conditions.

(1) $f':(X',B')\to (Y',C')$ is a toroidal morphism.

(2) All the fibers of $f'$ have the same dimension ($f'$ is flat in fact).

(3) $v_{X}^{-1}(Z) \subset B'$.

(4) The general fiber of $f'$ is smooth. 
\end{thm'}

We call the morphism $f':(X',B')\to (Y',C')$ a {\it well-prepared birational model} of $f:X\to Y$.
We will not use the so-called weak semi-stable reduction.
A toroidal variety $(X,B)$ is said to be {\it smooth} if $X$ is smooth and $B$ has only normal crossings.
It is {\it quasi-smooth} if there exists a local toric model of each point, which is a quotient of a smooth toric local model by a finite abelian group (some people say that $X$ is a $V$-manifold and $B$ is $V$-normal crossing).

\begin{proof}[{Proof of Theorem \ref{wp}}]Except for condition (4), the statement is the same as  \cite[Theorem 2]{Ka15}. After toroidalization, to make the fibers equidimensional, we do not change the generic fiber when applying \cite[Proposition 4.4]{AK}.   
The condition (4) follows. 

\end{proof}

\medskip

We will pass to a smooth model of $X$.
We may lose the well-preparedness. 

\begin{lem'}\label{modif}
Let $f:(X,P)\to (Y,Q), D, M, L$ be as in \ref{tKsp}\,(1)--(4).
Let $\ga:X'\to X$ be a log-resolution of the pair $(X,P)$.
Write as 
$$
	K_{X'}+D' = \ga^{*}(K_{X}+D)
$$
for a $\BQ$-divisor $D'$ on $X'$, and let $D' = \sum_{j} d'_{j} P'_{j}\ (d'_{j}\in \BQ)$ be the decomposition into irreducible components.
Then, 
$f' =f \circ \ga : X' \to Y, Q, D', M'=\ga^{*}M$ and $L$ satisfy the following (1)--(3):
\begin{itemize}
\item[(1)] $d'_j<1$,
\item[(2)] $\dim_{\mathbb C(\eta)}f'_*\mathcal O_{X'}
(\ulcorner -D'\urcorner)\otimes_{\mathcal O_Y}\mathbb C(\eta)=1$, 
where $\eta$ is the generic point of $Y$, and
\item[(3)] 
$K_{X'}+D'+M' \sim_{\mathbb Q} {f'}^*(K_Y+L)$.
\end{itemize}

Moreover, consider the discriminant part $\Del_{Y}'$ and the moduli part $M_{Y}' = L - \Del_{Y}'$ for $f'$ and $Q$ as in \ref{tKsp}\,(5).
Then, $\Del_{Y}'=\Del_{Y}$ holds and hence $M_{Y}' = M_{Y}$ as a consequence.
\end{lem'}

\begin{proof}
(i) 
We can see that $(X',D')$ is sub-klt as $(X,D)$ is sub-klt, and hence that $D'$ satisfies (1).
We have $D'=\ga^*D - K_{X'/X}=\wtil D_+ - \wtil D_- +E_+ - E_-$,
where $\wtil D_+$ and $\wtil D_-$ are the strict transforms of $D_+$ and $D_-$ respectively, and where $E_+$ and $E_-$ are effective $\ga$-exceptional divisors without common components.
All the coefficients of $\wtil D_+$ and $E_+$ need to be strictly less than 1.
Then $\ru{-D'} = \ru{ \wtil D_-} + \ru{E_-}$.
At a general point $y\in Y$, $X_{y}$ is normal, $X'_{y}$ is smooth, and the effective divisor $\ru{E_-}|_{X'_y}$ is $\ga_y$-exceptional (or $\ru{E_-}|_{X'_y}=0$) with respect to the induced morphism $\ga_y: X'_y \to X_y$. 
Using this, we have 
$1 \le h^0(X'_y, \ru{ \wtil D_-} + \ru{E_-}) \le h^0(X_y, \ru{D_-})=1$.
Thus $f'_*\CO_{X'}(\ru{-D'})$ is of rank 1, that is (2).
We have (3) by construction.

\smallskip

(ii)
Let us see that the discriminant part does not depend on the birational model of $X$. Let $Q_l$ be an irreducible component of $Q$.
For $f:X\to Y$, we set 
$$
	b_{Q_l}=\max \left\{t \in \mathbb Q\, \left|\, 
	\begin{array}{l}  {\text{$(X, D+tf^*{Q_l})$ is sublc over}}\\
	{\text{the generic point of $Q_l$}}. 
	\end{array}\right. \right\}. 
$$

Let us see first that $\delta_{l} =1- b_{Q_l}$. 
By shrinking $Y$ at the generic point of $Q_l$, we may assume that all $P_j$ dominate $Q_l$. 
We consider the pair $(X, D+ (1- \delta_{l})f^*Q_l )$. 
Then, the pair is quasi-smooth, and every coefficient is less than or equal to $1$.
Moreover, there exists some $P_j$ such that the coefficient is $1$. 
Then, it is enough to show that for a quasi-smooth pair $(X,B)$ is sublc if and only if $B \leq 1$ (i.e., every coefficient of $B$ is less than or equal to 1).
But this follows from \cite[Proposition 5.20 (4)]{komo}.

We let
\begin{eqnarray*} 
{f'}^*Q_l &=& 
\sum_{j}w'_{lj}P'_j, \ \text{where}\ w'_{lj}>0, \nonumber\\
\bar{d'}_j&=&\frac{d'_j+w'_{lj}-1}{w'_{lj}}\ 
\text{if}\ f'(P'_j)=Q_l, \nonumber\\
\delta'_{l}&=& \mathrm{max}\{\bar{d'_j}| f'(P'_j)=Q_l\}. \nonumber
\end{eqnarray*}
We would like to show that $\delta_{l}=\delta'_{l}$.
We observed that $ \delta_{l} =1- b_{Q_l}$. 
On the other hand, it holds that  
$ \delta'_{l} =1- \mathrm{lct}_{{f'}^* Q_{lgen}} (X', D')$, where 
$$
\mathrm{lct}_{{f'}^* Q_{lgen}} (X', D') 
=\max \left\{t \in \mathbb Q\, \left|\, 
\begin{array}{l}  {\text{$(X', D'+t {f'}^*{Q_l})$ is sublc over}}\\
{\text{the generic point of $Q_l$}}. 
\end{array}\right. \right\}. 
$$ 
Since such log-canonical thresholds do not change by the crepant pullback, 
we have $b_{Q_l}=  \mathrm{lct}_{{f'}^* Q_{lgen}} (X', D')$. 
That is the desired conclusion. 
\end{proof}

\subsection{Analytic preliminary}

We recall a few basic facts on the Lelong number, which measures the ``analytic''  singularities of plurisubharmonic (psh for short) functions.
We refer to \cite[2.B, 5.B]{Dem} for some properties of psh functions related to algebraic geometry.
We also refer to \cite{Dem} for a general theory of singular Hermitian metrics, multiplier ideal sheaves, and so on.

\begin{dfn'}
Let $\varphi$ be a (quasi-)psh function on an open set $B$ in $\mathbb{C}^n$. 
The {\textit{Lelong number}} $\nu(\varphi, x)$ of $\varphi$ at $x \in B$ is defined by 
$$
	\nu(\varphi, x)=\liminf_{z \to x}\frac{\varphi(z)}{\mathrm{log}|z-x|}.
$$
The Lelong number does not depend on the choice of local coordinates and can, hence, be defined on complex manifolds.
\end{dfn'}

A singular Hermitian metric $h$ on a line bundle $L$ can be written as $h=e^{-\vph}$ with $\vph \in L^{1}_{loc}(U,\BR)$ on every local trivialization $L|_{U} \cong U \times \BC$.
Every such $\vph$ is said to be a weight function of $h$.
If the curvature current of $h$ is semi-positive, then $\vph$ is psh on $U$.
The curvature positivity of $h$ relates to positivity notions in algebraic geometry.
The following result is particularly related to this paper, which is derived from Nadel's vanishing theorem (\cite[Proposition 3.5]{FF}).
See also \cite[Proposition 3.7]{Dem92} for a stronger statement.

\begin{prop'}\label{L0nef}
A line bundle on a smooth projective variety admitting a singular Hermitian metric $h$ with semi-positive curvature and whose local weight functions have vanishing Lelong numbers everywhere is nef. Namely, the intersection number with any curve is semi-positive. 
\end{prop'}

The following is a particular case of \cite[Theorem 2]{Fa}.

\begin{lem'}\label{Fav}
Let $f:U\to V$ be a surjective holomorphic map of complex manifolds, and let $\vph$ be a psh function on $V$.
Let $p\in U$.
Then, $\nu(\vph,f(p))=0$ if and only if $\nu(f^*\vph,p)=0$.
\end{lem'}

Although we do not try to define the Lelong number on singular spaces, we can introduce the zero Lelong number property.
Refer to \cite[\S 1]{Pe} for psh functions on possibly singular complex spaces.

\begin{dfn'}\label{defLe=0}
Let $V$ be a normal complex space, and let $\vph$ be a psh function on $V$.
Let $q\in V$.
The function {\it $\vph$ has zero (or vanishing) Lelong number at $q$}, if there exist a complex manifold $U$, a surjective holomorphic map $f:U\to V$ and a point $p\in f^{-1}(q)$ such that $\nu(f^*\vph,p)=0$.
\end{dfn'}

This property is preserved by an arbitrary pull-back.

\begin{lem'}\label{Le=0}
Let $V$ be a normal complex space, and let $\vph$ be a psh function on $V$.
Let $q\in V$.
Suppose $\vph$ has zero Lelong number at $q$.
Let $g:W\to V$ be a surjective holomorphic map from a normal complex space $W$ and take a point $r\in g^{-1}(q)$.
Then, $g^{*}\vph$ has zero Lelong number at $r$.
In particular, if $W$ is smooth, $\nu(g^*\vph,r)=0$.
\end{lem'}

\begin{proof}
Let $f:U\to V$ and $p\in f^{-1}(q)$ be as in \ref{defLe=0} such that $\nu(f^*\vph,p)=0$.
(We can restrict everything over a relatively compact open neighborhood of $q$.)
By taking the fiber product $W\times_V U$ and a resolution of singularities of it, we can find a complex manifold $X$ with surjective holomorphic maps $f':X\to W$ and $g':X\to U$ such that $g\circ f'=f\circ g'$.
We can also take a point $x\in X$ such that $f'(x)=r$ and $g'(x)=p$.
By \ref{Fav}, the property $\nu(f^*\vph,p)=0$ is equivalent to $\nu((g\circ f')^*\vph,x)=0$, which is saying that $g^{*}\vph$ has zero Lelong number at $r$.
\end{proof}

We will use the following criterion to check the vanishing of a Lelong number.

\begin{lem'}\label{nu0}
Let $U$ be a unit ball in $(\BC^n, z=(z_{1},\ldots,z_{n}))$,
and let $\vph$ and $u$ be quasi-psh functions on $U$ with $\nu(u,0)=n$.
Let $k \in (0,2]$.
Suppose $e^{-\vph} e^{- cu} \in L^1_{loc}(U)$ for any $c \in (0,k)$.
Then $\nu(\vph,0) \le (2-k)n$.
For example, we can conclude $\nu(\vph,0)=0$, if we can take $k=2$ and $u= \log|\prod_{j=1}^n z_j|$.
\end{lem'}

\begin{proof}
Suppose on the contrary that $\nu(\vph,0)>(2-k)n$.
We let $a > 0$ so that $\nu(\vph,0) = (2-k)n + a$, and take $b>0$ such that $b < \min\{k,a/n\}$.
Then $\vph+(k-b)u$ is quasi-psh on $U$ and
$\nu(\vph+(k-b)u,0) \ge \nu(\vph,0) +(k-b)\nu(u,0)
=2n+a-bn>2n$.
Then $e^{-\vph} e^{- (k-b)u} \not\in L^1_{loc}(U)$ (\cite[5.6]{Dem}).
This is a contradiction.
\end{proof}

Psh functions with vanishing Lelong numbers do not affect multiplier ideals.

\begin{lem'}\label{open}
Let $U$ be a unit ball in $\BC^n$,
and let $\vph$ and $u$ be quasi-psh functions on $U$.
Suppose $\nu(\vph,0)=0$.
Then $\CI(e^{-\vph} e^{-u}) = \CI(e^{-u})$ on a neighborhood of $0$.
\end{lem'}

Here, $\CI(e^{-u})\subset \CO_{U}$ is a multiplier ideal sheaf.
It is the sheaf of germs of holomorphic functions $f$ which is $L^{2}$ with respect to $e^{-u}dv$, where $dv$ is the standard Lebesgue measure on $U$, namely $|f|^{2}e^{-u} \in L^{1}_{loc}(U)$.
The multiplier ideal sheaf is coherent if $u$ is quasi-psh (\cite[5.7]{Dem}).

\begin{proof}
$\CI(e^{-\vph} e^{-u}) \subset \CI(e^{-u})$ is clear.
We take $f\in \CI(e^{-u})_{0}$.
We can take a relatively compact open neighborhood $V$ of $0$ in $U$ such that $\int_{V} |f|^{2} e^{-u} < \infty$.
Here, we take the Lebesgue measure as a volume form on $U$.
We take $p,q\in \BR_{>1}$ with $1/p+1/q=1$ such that $\CI(e^{-qu}) = \CI(e^{-u})$ on $V$.
The last property is the so-called strong openness property of multiplier ideals \cite{GZ} (see also \cite{Be}). 
As $\nu(p\vph, 0)=0$, we can take a neighborhood $W\subset V$ of $0$ such that $e^{-p\vph} \in L^{1}(W)$ by Skoda's lemma (\cite[5.6]{Dem}).
Then, by H\"older's inequality,
$$
	\int_{W} |f|^{2} e^{-\vph} e^{-u}
	\le \Big( \int_{W} |f|^{2} e^{-p\vph} \Big)^{1/p} 
	     \Big( \int_{W} |f|^{2} e^{-q u} \Big)^{1/q}. 
$$
Thus, we have $f \in \CI(e^{-\vph} e^{-u})_{0}$.
If $u$ has only analytic singularities, the openness property is an elementary property.
\end{proof}

The next lemma is trivial.

\begin{lem'}\label{zint}
Let $a_j, b_j \in \BR \ (j=1,2,\ldots,n)$ with $a_j<2$ for any $j$.
Then
$$
	\prod_{j=1}^n \frac{ (-\log |z_j|)^{b_j} }{ |z_j|^{a_j} } \in L^1_{loc}(\BC^n).
$$
\end{lem'}

\section{Fiber integral}

\subsection{Basic estimate: Smooth total space}

Let everything be as in \ref{tKsp}, and suppose $X$ is smooth.
We let $\dim X=n+m$ and $\dim Y = m$. 
We explain a key object: a fiber integral, which will appear repeatedly in this paper (and can be formulated in various other situations).

Let us recall a convention, which is commonly used in complex algebraic geometry.
Let $\Del$ be an effective $\BQ$-divisor, say on $X$ in general.
We take a positive integer $k$ such that $k\Del$ is a divisor with integral coefficients, a section $s_{k\Del} \in H^{0}(X,k\Del)$ whose divisor is $k\Del$, and a Hermitian metric $h_{k\Del}$ on the line bundle $k\Del$.
Then, $s_{\Del}:= (s_{k\Del})^{1/k}$ is a root of section (or a multi-valued holomorphic section), and $h_{\Del} := (h_{k\Del} )^{1/k}$ is a Hermitian metric on the $\BQ$-line bundle $\Del$ in a somewhat formal sense.  
But, $|s_{\Del}|_{h_{\Del}} := (|s_{k\Del}|_{h_{k\Del}})^{1/k}
\in C^{0}(X,\BR_{\ge 0})$ is a function in the usual sense.


\begin{no'}\label{intF}
We let
$$
	E = \ru{D_{-}} - K_{X/Y},
$$
which is a line bundle on $X$.
We take any Hermitian metric $h$ on $E$.
We take a root of section $s_{\Del_{D}}$ whose divisor is $\Del_{D}$, and take a Hermitian metric $h_{\Del_{D}}$ on the $\BQ$-line bundle $\Del_{D}$.
Let $\vph \in L^{1}_{loc}(X,\BR)$ be a quasi-psh function.
We finally let 
$$
	g =  h \ \frac{ e^{-\vph} }{ |s_{\Del_{D}}|_{h_{\Del_{D}}}^{2} }
$$ 
be a singular Hermitian metric on $E$.
We take a section 
$$
	\sg=s_{\ru{D_{-}}} \in H^{0}(X,\ru{D_{-}}) = H^{0}(X, K_{X/Y}+E)
$$ 
whose divisor is $\ru{D_{-}}= \ru{-D}$.
We regard $\sg$ as an $E$-valued relative holomorphic $n$-form.
We then consider the associated relative (possibly singular) volume form $\w$ and the fiber ($L^{2}$-)integral of $\sg$ with respect to $g$:
$$
	\w = g \ c_{n}\sg\wed\ol\sg, \ {\rm and } \ 
	F_{\w}(y)=\int_{X_{y}} \w \ \in [0,+\infty]
$$
for $y \in Y\sm Q$.
Here $c_{n}=\ai^{n^{2}}$ so that the pairing becomes semi-positive on each fiber.
\qed\end{no'}

For $y \in Y\sm Q$, $F_{\w}(y) < +\infty$ if and only if 
$\sg|_{X_{y}} \in H^{0}(X_{y}, K_{X_{y}}\ot E|_{X_{y}} \ot \CI(g|_{X_{y}}))$,
where $\CI(g|_{X_{y}})$ is the multiplier ideal sheaf of $g|_{X_{y}}$.
We understand $F_{\w}(y)=+\infty$ if $\vph|_{X_{y}}\equiv -\infty$ on a fiber for example.
We know, in general, $F_{\w}(y)$ is lower-semi-continuous on which $f$ is smooth, \cite[22.5]{HPS}.

We are interested in the behavior of $F_{\w}(y)$ as $y$ approaches to $Q$.
Here, we are only concerned about the singularity type of $g$, not about the curvature property of $g$, e.g.~ semi-positivity.
The choices of smooth metrics $h, h_{\Del_{D}}$ also do not affect the following conclusion.

\begin{thm'}\label{L1}
Suppose $X$ is smooth and $\Del_{Y}=0$ in \ref{tKsp}, and use the notations in \ref{intF}.
Suppose the Lelong number $\nu(\vph,x)=0$ for every $x \in X$.
Let $y_{0}\in Y$ and let $V \subset (\BC^m,t=(t_1,\ldots,t_m))$ be a local coordinate centered at a point $y_0\in Y$.
Then

(1) Upper estimate.
$F_{\w} /|\prod_{j=1}^m t_j|^{c} \in L^1_{loc}(V,0)$ for any $c\in (0,2)$.
In particular, if $-\log F_{\w}$ is quasi-psh on a neighborhood of $y_{0}$,
one has $\nu(-\log F_{\w}, y_{0})=0$.

(2) Lower estimate.
There exist positive constants $\del(<1)$ and $c_{1}$ such that, for any $t\in V \sm Q$ with $|t|<\del$, $F_{\w}(t) \ge c_{1}$ holds.
\end{thm'}

\begin{proof}
(1)
{\it Step 1. Preliminary discussion.}
We take $c\in (0,2)$.
We put $T=\prod_{j=}^{m} t_{j}$, and $|s_{\Del_{D}}| = |s_{\Del_{D}}|_{h_{\Del_{D}}}$ for short.
We note $h c_{n}\sg\wed\ol\sg |dt|^{2}$ is a $C^{\infty}$ semi-positive volume form on $X_{V}=f^{-1}(V)$, where $dt=dt_{1}\wed\cdots\wed dt_{m}$ and $|dt|^{2}= c_{m}dt\wed d\ol t$.
Then
$$
	\int_{V} F_{\w} \ \frac{ |dt|^{2} }{ |T|^{c} }
	= \int_{X_{V}}	  \frac{ e^{-\vph} }{ |s_{\Del_{D}}|^{2} |T|^{c} }
	h  c_{n}\sg\wed\ol\sg |dt|^{2}.
$$
We take an open covering $\CU = \{U_{\lam} \}_{\lam}$ of $X$, which is fine enough and locally finite, and take a partition of unity $\{\rho_{\lam}\}_{\lam}$ subordinate to $\CU$.
We can suppose $\ol {X_{V}} \cap U_{\lam} \ne \emptyset$ for a finite number of $\lam$.
Then
$$
	\int_{V} F_{\w} \ \frac{ |dt|^{2} }{ |T|^{c} }
	= \sum_{\lam} \int_{U_{\lam}\cap X_{V}} \rho_{\lam} 
	 \frac{ e^{-\vph} }{ |s_{\Del_{D}}|^{2} |T|^{c} }
	 h c_{n}\sg\wed\ol\sg |dt|^{2}.
$$
Thus, it is enough to evaluate an integral on each local coordinate $U$ of $X$.
We let $X_{t}=f^{-1}(t)$ for $t \in V$.

\smallskip

{\it Step 2. Local coordinate.}
At each $x_{0}\in X_{0}$, we can find local coordinates $U\subset (\BC^{n+m}, x=(x_{1},\ldots,x_{n+m})), V\subset (\BC^{m}, t=(t_{1},\ldots,t_{m}))$
(we suppose $|x_{i}| \le 1$ for all $i$, $|t_{j}| \le 1$ for all $j$), 
such that $f|_{U}:U\to V$ is given by $t=f(x), t_{j}=f_{j}(x), j=1,\ldots,m$, with
$$
	t_{1}=\prod_{j=1}^{\ell_{1}} x_{j}^{a_{j}}, \ 
	t_{2}=\prod_{j=\ell_{1}+1}^{\ell_{2}} x_{j}^{a_{j}}, \ \ldots, \ 
	t_{m}=\prod_{j=\ell_{m-1}+1}^{\ell_{m}} x_{j}^{a_{j}} 
$$
for some $0=\ell_{0}<\ell_{1}<\ldots < \ell_{m}\le n+m$ and $a_j\in \BZ_{>0}$.
(The coordinates $x,t$ depend on $x_{0}$, but)
Moreover $Q\cap V\subset \{t_{1}\cdots t_{m}=0\}$ and $P\cap U \subset \{x_{1}\cdots x_{n+m}=0\}$. 
This $V\subset (\BC^{m},t)$ may be different from the one we took in the statement of \ref{L1}.
However, it does not matter what our purpose is.
We can write as $\ell_{i}=\ell_{i-1}+n_{i}+1$ with $n_{i}\in \BZ_{\ge 0}$ and $\sum_{i=1}^{m+1} n_{i}=n$, where $n_{m+1}:=n+m-\ell_{m}$.
We denote by $y_{j}=x_{\ell_{j}}, b_{j}=a_{\ell_{j}}$ for $j=1,\ldots,m$, and by
\begin{equation*} 
\begin{aligned}
&\bfx_{j}=(x_{\ell_{j-1}+1},\ldots,x_{\ell_{j-1}+n_{j}}), \
\bfa_{j}=(a_{\ell_{j-1}+1},\ldots,a_{\ell_{j-1}+n_{j}}), \\
&\bfx_{m+1}=(x_{\ell_{m}+1},\ldots,x_{n+m}).
\end{aligned}
\end{equation*} 
We will use a multi-index convention. 
For example $t_{j}=\bfx_{j}^{\bfa_{j}} y_{j}^{b_{j}}$.

\smallskip

{\it Step 3. Integrand.}
We may suppose that every ($\BQ$-)line bundle appearing in our situation is trivialized on a neighborhood of $\ol U$ or of $\ol V$.
We take a local frame $dx=\wed_{i=1}^{n+m} dx_{i}$ of $K_{X}|_{U}$ and $dt=\wed_{j=1}^{m} dt_{j}$ (as before) of $K_{Y}|_{V}$ respectively.
Using relations $\frac{dt_{j}}{t_{j}} 
= \sum_{i=\ell_{j-1}+1}^{\ell_{j-1}+n_{j}} a_{i} \frac{dx_i }{ x_{i} } + b_{j}\frac{dy_{j}}{y_{j}}$, we have
$\wed_{i=\ell_{j-1}+1}^{\ell_{j}} dx_{i}
=  \frac{y_{j}}{b_{j}t_{j}} \big( \wed_{i=\ell_{j-1}+1}^{\ell_{j-1}+n_{j}} dx_{i} \big) \wed dt_{j}$, in other term $d\bfx_{j}\wed dy_{j}=  \frac{y_{j}}{b_{j}t_{j}} d\bfx_{j} \wed dt_{j}$.
Then, a local frame $\frac{dx}{dt}$ of $K_{X/Y}$ on $U$ can be expressed as
$$
	\frac{dx}{dt} 
	= \bigwedge_{j=1}^{m} \Big(\frac{y_{j}}{b_{j}t_{j}}  d\bfx_{j}\Big) \wed d\bfx_{m+1},
$$
which satisfies $\frac{dx}{dt} \wed f^{*}dt  = \pm dx$.
We also let $|dx|^{2} = c_{n+m}dx\wed d\ol x$, and
$\big| \frac{dx}{dt}\big| = c_{n} \frac{dx}{dt} \wed \ol {\frac{dx}{dt}}$. 

Regarding $\sg \in H^{0}(X,K_{X/Y}+E)= {\rm Hom}_{\CO_{X}}(f^{*}K_{Y},K_{X}+E)$, we have $\sg(f^{*}dt)|_{U} = \sg_{U}dx  = \pm \sg_U \frac{dx}{dt} \wed f^{*}dt$ for some $\sg_{U}\in H^{0}(U,\CO_{X}) (\cong H^{0}(U,E))$, and hence
$$
	\sg|_{X_{t}} = \pm \sg_U \frac{dx}{dt} \Big|_{X_{t}}
$$ 
for $t\in V\sm Q$.
The metric $h$ can be seen as a function in $C^{\infty}(\ol U, \BR_{>0})$.
Thus we have 
$$
	h c_{n}\sg\wed\ol\sg |dt|^{2} = A_{U}|\sg_{U} |^{2} |dx|^{2}
$$ 
on $U$ for some $A_{U} \in C^{\infty}(\ol U, \BR_{>0})$.
We finally take a cut-off function $\rho_{U} \in C^{\infty}_{c}(U, \BR_{\ge 0})$.
Then, in view of Step 1, we need to estimate
$$
	I_{U} = \int_{U} \rho_{U} 
	  e^{-\vph} \frac{ A_{U} |\sg_{U} |^{2} }{ |T|^{c} |s_{\Del_{D}}|^{2} }
	 |dx|^{2}. 
$$

\smallskip

{\it Step 4. Reduction to an un-twisted case.}
We introduce auxiliary quantities.
Recall $f^{*}Q_{j}= \sum_{i}a_{ji}P_{i}$ for each $j$.
We set $c_{j} = \max_{i}\{d_{i}/a_{ji} \ {\rm if } \ f(P_{i}) = Q_{j} \}$, and
$\wtil c = \max_{j} c_{j} \in \BR$.
We take $\ep > 0$ such that $c(1+\ep)+2\wtil c \ep<2$.
A simpler condition $c(1+\ep)+2d_{i}\ep < 2$ for any coefficient $d_{i}$ of $P_{i}$ also works (which in fact implies $c(1+\ep)+2\wtil c \ep<2$).
We also require that $d_{i}(1+\ep) < 1$ holds for any $i$ with $d_{i}<1$.
We then take $q=1+\ep$ and $p \gg 1$ such that
$\frac1p+\frac1q=1$. 

By H\"older's inequality, we have
$$
	I_{U} 
	\le \big\| \rho_{U}^{1/p} e^{-\vph} \big\|_{L^{p}(U, |dx|^{2})} 
		\cdot
	\Big\| \rho_{U}^{1/q} \frac{ A_{U} |\sg_{U} |^{2} }{ |T|^{c}  |s_{\Del_{D}}|^{2} }
	 \Big\|_{L^{q}(U, |dx|^{2})}. 
$$
The first term on the right hand side is finite:
$$
	(J_{p}:=) \ \big\| \rho_{U}^{1/p} e^{-\vph} \big\|_{L^{p}(U, |dx|^{2})}^{p}
	= \int_{U} \rho_{U} e^{-p\vph} |dx|^{2} < \infty,
$$
by virtue of our assumption $\nu(\vph,x)=0$ for any $x\in X$.
We next consider 
$$
	J_{q} := \int_{U} \rho_{U} 
	\frac{ A_{U}^{q} |\sg_{U} |^{2q} }{ |T|^{cq} |s_{\Del_{D}}|^{2q} } |dx|^{2}.
$$	
By Fubini, we have
$$
	J_{q}
	= \int_{t\in V} \frac{ |dt|^{2} }{ |T|^{cq} }
	\int_{X_{t}\cap U} \rho_{U} A_{U}^{q} 
	\frac{ |\sg_{U} |^{2q} }{ |s_{\Del_{D}}|^{2q} } \Big|\frac{dx}{dt} \Big|^{2}.	
$$	
We will show in the next step that the fiber integral is estimated as
$$
	\int_{X_{t}\cap U} \rho_{U} A_{U}^{q} 
	\frac{ |\sg_{U} |^{2q} }{ |s_{\Del_{D}}|^{2q} } \Big|\frac{dx}{dt} \Big|^{2}
	\le
	\frac{ A }{  |T|^{2\wtil c \ep} } \prod_{j=1}^{m} (-\log |t_{j}|)^{n}
$$
for any $t \in V\sm Q$, where $A>0$ is a constant independent of $t$.
This estimate deduces that
$$
	J_{q} 
	\le A \int_{t \in V} 
	\frac{ \prod_{j=1}^{m} (-\log |t_{j}|)^{n} }{ |T|^{cq+2\wtil c \ep} }\ |dt|^{2}
	<\infty.
$$
The last integrability follows from $cq+2\wtil c \ep <2$ and \ref{zint}.
We have $I_{U} < \infty$ as a conclusion.

The statement $\nu(-\log F_{\w},y)=0$ is a consequence of this integrability combined with \ref{nu0}.

\smallskip

{\it Step 5. Estimate of fiber integral.}
We can suppose $V\cap Q= \{t_{1}\cdots t_{k}=0\}$ for some $k$ with $0 \le k \le m$, and $f$ is smooth over $V \sm Q$ and singular over $V\cap Q$ (cf.\ \cite[2.2\,(2)]{Ta}).
We can let $Q_{j}\cap V=\{t_{j}=0\}$ for $1 \le j \le k$, and $P_{i}\cap U=\{x_{i}=0\}$ for $1\le i \le \ell_{k}$ (such that $P_{i}$ is $f$-vertical and appeared in $D$).
For a notational simplicity, we suppose $k=m$ (and hence $\ell_{k}= \ell_{m}$).
Other cases $0 \le k\le m-1$ can be deduced from the case $k=m$.
In particular $P_{i}$ is $f$-vertical if and only if $1\le i \le \ell_{m}$.
We write as 
$$
	D^{h}={\rm div} \Big(\prod_{i > \ell_{m}, d_{i}<1} x_{i}^{d_{i}} \Big), \
	D^{v}_{-}={\rm div} \Big(\prod_{ i \le \ell_{m}, -d_{i}>0} x_{i}^{-d_{i}} \Big), \
	D^{v}_{+} = {\rm div }\, (1) = 0.
$$
Here $D_{+}^{v}=0$ by our assumption:\ $f$ is equi-dimensional and $\Del_{Y}=0$ (cf.\ \cite[2.2\,(3)]{Ta}).

We estimate the integrand of the fiber integral in $J_{q}$.
We have
\begin{equation*} 
\begin{aligned}
	|\sg_{U}|^{2} 
	& = A _{1} \cdot \prod_{i, -d_{i}>0} |x_{i}|^{2\ru{-d_{i}}}, 
	\\
	|s_{\Del_{D}}|^{2}
	& = A _{2} \cdot \prod_{i>\ell_{m}, 0<d_{i}<1} |x_{i}|^{2d_{i}} 
	 \prod_{i, -d_{i}>0} |x_{i}|^{ 2(\ru{-d_{i}} + d_{i}) }  
\end{aligned}
\end{equation*} 
on $U$, where $A_{1}, A_{2} \in C^{\infty}(\ol U, \BR_{>0})$.
Then 
$$
	\frac{ |\sg_{U} |^{2q} }{ |s_{\Del_{D}}|^{2q} } \Big|\frac{dx}{dt} \Big|^{2}
	= A_{3} \bigwedge_{j=1}^{m+1} W_{j} 
$$
holds with $A_{3}= A_{1}^{q} A_{2}^{-q} \prod_{j=1}^m b_j^{-2} \in C^{\infty}(\ol U, \BR_{>0})$, and
\begin{equation*} 
\begin{aligned}
	& W_{j} = ( |y_j|^{2})^{-d_{\ell_j}q+1-a_{\ell_j}} 
	\prod_{i=\ell_{j-1}+1}^{\ell_{j-1}+n_{j}} (|x_i|^2)^{-d_iq-a_i}
	\  c_{n_{j}} d\bfx_{j}\wed d\ol \bfx_{j} \ \ (j= 1, \ldots, m),
\\ 
	& W_{m+1} = \frac{ \prod_{i>\ell_m, d_{i} <0 } |x_{i}|^{-2d_{i}q} }
	{ \prod_{i > \ell_m, 0 < d_{i} <1 } |x_{i}|^{2d_{i}q} }\
	c_{n_{m+1}} d\bfx_{m+1}\wed d\ol \bfx_{m+1}.
\end{aligned}
\end{equation*} 

\smallskip

We now start to estimate.
Let us look at $W_{1}$ for example, and show
$$
	W_{1} \le \frac{1}{|t_{1}|^{2c_{1}\ep}}
 			 \frac{ c_{n_1} d\bfx_1 \wed d\ol \bfx_1 }{ | \bfx_{1} |^{2} }
$$
as $(n_{1},n_{1})$-form on $U$, where $| \bfx_{1} |^{2} = \prod_{i=1}^{n_{1}} |x_{i}|^{2}$ (and $| \bfx_{j} |^{2} = \prod_{i=\ell_{j-1}+1}^{\ell_{j-1}+n_{j}} |x_{i}|^{2}$ in general). 

(i) 
For $t_{1}=\prod_{j=1}^{\ell_{1}} x_{j}^{a_{j}}$, we have
$|t_{1}|^{c_{1}} = \prod_{j=1}^{\ell_{1}} | x_{j}^{a_{j}} |^{c_{1}}
\le \prod_{j=1}^{\ell_{1}} |x_{j}^{a_{j}} |^{d_{j}/a_{j}}$ (by definition of $c_{1}$ and as $|x_{j}|\le 1$).
Thus, $|t_{1}|^{c_{1}\ep } \le \prod_{j=1}^{\ell_{1}} |x_{j} |^{d_{j}\ep}$.

(ii)
We note, as a consequence of $\Del_{Y}=0$, that for every $j\in \{1,\ldots,m\}$ and $i\in \{\ell_{j-1}+1,\ldots,\ell_{j}\},  -d_{i}-a_{i}\ge -1$ holds, in particular 
$-d_{\ell_j}+1-a_{\ell_j}\ge 0$ holds.
Recalling $q=1+\ep$, we use it as $-d_{\ell_j}q+1-a_{\ell_j} \ge -d_{\ell_j}\ep$ and $-d_{i}q-a_{i}\ge -1-d_{i}\ep$.

(iii) 
The ``coefficient function'' of $W_{1}$ can be estimated as
$( |y_1|^{2})^{-d_{\ell_1}q+1-a_{\ell_1}} \prod_{i=1}^{n_{1}} (|x_i|^2)^{-d_iq-a_i}$
$\le_{{\rm (ii)}}$ 
$( |y_1|^{2})^{-d_{\ell_1}\ep } \prod_{i=1}^{n_{1}} |x_{i}|^{-2} (|x_i|^2)^{-d_i\ep}
\le_{{\rm (i)}} 
|t_{1}|^{-2c_{1}\ep} | \bfx_{1} |^{-2}$.
This gives the desired estimate of $W_{1}$.

\vskip2mm
We have a similar estimate for $W_{j} \ (j=2,\ldots,m)$.
We note $\prod_{j=1}^{m} |t_{j}|^{-2c_{j}\ep} \le |T|^{-2\wtil c \ep}$.
For $W_{m+1}$, we simply estimate it as
$$
	W_{m+1} \le \frac{ c_{n_{m+1}} d\bfx_{m+1}\wed d\ol \bfx_{m+1}  }
	{ \prod_{i > \ell_m, 0 < d_{i} <1 } |x_{i}|^{2d_{i}q} }.
$$
We introduce a (possibly singular) semi-positive $(n,n)$-form
$$
 \psi =
 	\bigwedge_{j=1}^m \frac{ c_{n_j} d\bfx_j \wed d\ol \bfx_j }
				{ | \bfx_{j} |^{2} }
	\wed \frac{ c_{n_{m+1}} d\bfx_{m+1}\wed d\ol \bfx_{m+1} }
 				{ \prod_{i > \ell_m, 0 < d_{i} <1 } |x_{i}|^{2d_{i} q} }
$$
on $(\BC^{n+m}, x)$.
We then have a constant $A_{4}>0$ (independent of $t$) such that
$$
	\rho_{U} A_{U}^{q} 
	\frac{ |\sg_{U} |^{2q} }{ |s_{\Del_{D}}|^{2q} } \Big|\frac{dx}{dt} \Big|^{2}
	= 
	\rho_{U} A_{U}^{q} A_{3} \bigwedge_{j=1}^{m+1} W_{j} 
	\le \frac{A_{4}}{|T|^{2\wtil c\ep}} \ \psi
$$ 
holds on $U$.

As $d_{i}q<1$ in the expression of $\psi$, 
we can directly apply the way of computation in \cite[p.1733, Step 4]{Ta} to conclude that
$$
	\int_{X_{t}\cap U} \psi
	\le
	A_{5} \prod_{j=1}^{m} (-\log |t_{j}|)^{n}
$$
holds for any $t\in V \sm Q$, where $A_{5}>0$ is a constant independent of $t$.
Thus we have
$$	\int_{X_{t}\cap U} \rho_{U} A_{U}^{q}
	 \frac{  |\sg_{U} |^{2q} }{ |s_{\Del_{D}}|^{2q} }
	 \Big|\frac{dx}{dt} \Big|^{2}
	\le \frac{A_{4}A_{5}}{|T|^{2\wtil c\ep}} \prod_{j=1}^{m} (-\log |t_{j}|)^{n}
$$
for any $t \in V \sm Q$.
This is what we wanted.

\smallskip

(2) 
Lower estimate.
As $\vph$ is quasi-psh, the function $e^{-\vph}$ has a positive lower bound on every compact set of $X$.
In particular there exists a constant $c_{0}>0$ such that $e^{-\vph}>c_{0}$ on a neighborhood of $X_{y_{0}}$.
(We merely use the existence of a positive lower bound of $e^{-\vph}$, and we do not use the Lelong number condition of $\vph$.)
The integrand is
$$
	\w 
	= h \frac{ e^{-\vph} }{ |s_{\Del_{D}}|^{2} } c_{n}\sg\wed\ol\sg
	\ge h \frac{ c_{0} }{ |s_{\Del_{D}}|^{2} } c_{n}\sg\wed\ol\sg.
$$
The last term is ``an un-twisted case''.
We then just follow the argument in \cite[p.1735, Step 5]{Ta}.
\end{proof}

\begin{rem'}\label{local ver}
Our argument in \ref{L1} is local over $Y$.
It can be generalized as follows.

Let $X$ and $Y$ be complex manifolds, $f : X \to Y$ be a proper surjective holomorphic map with connected fibers, which satisfies the conditions (1)--(3) in \ref{tKsp} in an appropriate sense for ($\BQ$-)divisors $P, Q$ and $D$.
(The property (4), $M, M_{Y}$ and $L$ are irrelevant.)
Suppose $\Del_{Y}=0$ in \ref{tKsp}\,(5).
Take $\vph \in L^{1}_{loc}(X,\BR)$, $s_{\Del_{D}}$,
$\sg=s_{\ru{D_{-}}} \in H^{0}(X,\ru{D_{-}})$, and $h, h_{\Del_{D}}$, and construct a singular Hermitian metric $g$ as in \ref{intF}.
Put  
$$
	F_{\w}(y)=\int_{X_{y}} g\,c_{n}\sg\wed\ol\sg \ \in [0,+\infty]
$$
for $y \in Y\sm Q$.
Let $y_{0}\in Y$ and let $V \subset (\BC^m,t=(t_1,\ldots,t_m))$ be a local coordinate centered at a point $y_0\in Y$.

(1) Suppose $\vph$ is quasi-psh on a neighborhood of $X_{y_{0}}$ and the Lelong number $\nu(\vph,x)=0$ for every $x \in X_{y_{0}}$.
Then, $F_{\w} /|\prod_{j=1}^m t_j|^{c} \in L^1_{loc}(V,0)$ for any $c\in (0,2)$.

(2) Suppose $\vph$ is locally bounded from above on a neighborhood of $X_{y_{0}}$.
Then, there exist positive constants $\del(<1)$ and $c_{1}$ such that, for any $t\in V \sm Q$ with $|t|<\del$, $F_{\w}(t) \ge c_{1}$ holds.
\qed\end{rem'}

\subsection{Quasi-smooth total space}

We explain fiber integrals in the orbi-fold setting. 
We refer to \cite{CM} for a basic discussions on orbi-folds;
\cite[\S 2, \S 3]{CM} gives a nice and concise account on (complex) orbi-folds, orbi-line bundles and so on.

\begin{no'}[Fiber integral]\label{intForb}
Let $X$ be a complex orbi-fold, $Y$ be a complex manifold, and let $f:X\to Y$ be a proper surjective holomorphic map with connected fibers.
We let $\dim X=n+m$ and $\dim Y = m$. 
We regard $\w_{X}=\CO_{X}(K_{X})$ as a reflexive sheaf of rank 1 and an orbi-line bundle.

(1) 
Let $E$ be an orbi-line bundle on $X$, and suppose $E$ is a $\BQ$-line bundle.
Suppose there is a section 
$$
	\sg \in H^{0}(X, K_{X/Y}+E).
$$ 
To be more precise, $K_{X/Y}+E$ means the double dual of the tensor product of orbi-line bundles $K_{X/Y}$ and $E$.
We regard $\sg$ as an orbi-line bundle $E$-valued relative holomorphic $n$-form on $X$ in the following sense.
For every $x \in X$, we can take an open neighborhood $U \subset X$ of $x$ such that there is an open neighborhood $\tU$ of $(\BC^{n+m},0)$ and a finite group $G$, which is ``small'', acting on $\tU$ and $U$ is the quotient $U=\tU/G$.
Let $\pi:\tU \to U=\tU/G$ be the quotient map.
We have $K_{\tU}=\pi^{*}K_{U}$ as a $\BQ$-divisor since there is no codimension 1 ramifications of $\pi$, and a line bundle $\pi^{*}E$ on $\tU$.
We consider
$$
	\pi^{*}\sg \in H^{0}(\tU, K_{\tU/Y}+\pi^{*}E),
$$ 
which can be seen as a (usual) $\pi^{*}E$-valued relative holomorphic $n$-form on $\tU$.
We note that $(\pi^{*}\sg)^{p} = \pi^{*}(\sg^{p})$ holds for each positive integer $p$.
The equality is valid over $\pi^{-1}(U_{reg})$ initially, and continue to hold on $\tU$.

(2) 
Let $g$ be a singular Hermitian metric on the $\BQ$-line bundle $E$ such that every local weight function $-\log g$ is quasi-psh.
We define a (possibly singular) relative volume form 
$$
	\w = g\,c_{n}\sg\wed\ol\sg
$$
on $X$ as follows.
In the first place, $\w = g c_{n}\sg\wed\ol\sg$ is a relative $(n,n)$-form on $X_{reg}$, where $c_{n}=\ai^{n^{2}}$.
For each orbi-fold local chart $\pi:\tU \to U=\tU/G$, $\pi^{*}\w$, defined on $\pi^{-1}(U_{reg})$, extends to $(\pi^{*}g) c_{n} \pi^{*}\sg \wed \ol{\pi^{*}\sg}$ on $\tU$, where $\pi^{*}g$ is a $G$-invariant singular Hermitian metric of $\pi^{*}E$ and $\pi^{*}\sg$ is a $G$-invariant $\pi^{*}E$-valued relative (with respect to $f\circ\pi$) holomorphic $n$-form on $\tU$.
We still denote $\pi^{*}\w= (\pi^{*}g) c_{n} \pi^{*}\sg \wed \ol{\pi^{*}\sg}$ on $\tU$.

(2')
Let $(V,t=(t_1,\ldots,t_m))$ be  a local coordinate of $Y$ and put $X_V=f^{-1}(V)$.
We trivialize $K_Y$ on $V$ by $dt=dt_1\wed\ldots\wed dt_m$.
We take a volume form $|dt|^2 =c_m dt \wed \ol{dt}$ on $V$.
We then understand $\w \wed f^{*}|dt|^{2}$ as a possibly singular semi-positive $(n+m,n+m)$-form (a positive measure) on $X_V$.
It is initially defined on $X_V \cap X_{reg}$ as a positive measure.
On each orbi-fold local chart $\pi:\tU \to U=\tU/G$ such that $U \subset X_V$, we have
$\pi^{*}\sg \in H^{0}(\tU, K_{\tU/Y}+\pi^{*}E)={\rm Hom}_{\CO_{\tU}}((f\circ\pi)^*K_Y, K_{\tU}+\pi^*E)$.
Then we have 
$(\pi^{*}\sg)((f\circ\pi)^{*}dt) \in H^{0}(\tU, K_{\tU}+\pi^{*}E)$, and have a positive measure $(\pi^{*}g) c_{n+m} (\pi^{*}\sg)((f\circ\pi)^{*}dt) \wed \ol{ (\pi^{*}\sg)((f\circ\pi)^{*}dt) }$ on $\tU$ which is $G$-invariant.
The pull-back $\pi^{*}(\w \wed f^{*}|dt|^{2})$ over $U_{reg}$ extends to
$(\pi^{*}g) c_{n+m} (\pi^{*}\sg)((f\circ\pi)^{*}dt) \wed \ol{ (\pi^{*}\sg)((f\circ\pi)^{*}dt) }$.

Regarding $\w \wed f^{*}|dt|^{2}$ as an $(n+m,n+m)$-form on $X_{V}$, we can consider its tensor power $(\w \wed f^{*}|dt|^{2})^{p}$ for every $p\in \BZ_{>0}$. 
We can not integrate it if $p>1$, but we can compare such powers of forms pointwise on $X_{V}$.

(3) 
Denote $Y_{f\,\text{q-sm}}=\{y\in Y;\ \text{$f$ is quasi-smooth over $y$}\}$.
Here $f$ is quasi-smooth at $x\in X$, if the orbi-fold tangent map $df:T_{X,x}\to f^*T_{Y,f(x)}$ is surjective, equivalently $d(f\circ\pi):T_{\tU,\wtil x}\to (f\circ\pi)^*T_{Y,f(x)}$ is surjective for a(ny) $\wtil x\in \tU$ such that $\pi(\wtil x)=x$, where $\pi:\tU\to U/G$ is an orbi-fold local coordinate around $x$.
We define a function $F_{\w}:Y_{f\,\text{q-sm}}\to [0,+\infty]$ given by the fiberwise ($L^{2}$-)integral of $\sg$ with respect to $g$:
$$
	F_{\w}(y)=\int_{X_{y}} \w \ \in [0,+\infty].
$$
We take an open covering $\{U_{k} \}$ of $X$, which is locally finite and each $U_{k}$ is a quotient $\pi_{k}:\tU_{k} \to U_{k}= \tU_{k}/G_{k}$ as above.
We take a partition of unity $\{\rho_{k}\}$ subordinate to $\{U_{k} \}$.
We then define, for every $y\in Y_{f\,\text{q-sm}}$,
\begin{equation*} 
\begin{aligned}
	F_{\w}(y)
	& := \int_{X_{y}} \w 
	=\sum_{k} \int_{X_{y} \cap U_{k} } \rho_{k} \w, \\ 
	\int_{X_{y} \cap U_{k}} \rho_{k}\w 
	& := \frac1{|G_{k}|} \int_{ (f\circ \pi_{k})^{-1}(y) } \pi_{k}^{*} (\rho_{k}\w)
	\ \in [0,+\infty].
\end{aligned}
\end{equation*} 

(3')
Let $(V,t=(t_1,\ldots,t_m))$ be  a local coordinate of $Y$ and put $X_V=f^{-1}(V)$.
We have an $(n+m,n+m)$-form $\w \wed f^{*}|dt|^{2}$ on $X_{V}$.
Then the push-forward is 
$$
	f_{*}(\w \wed f^{*}|dt|^{2}) = F_{\w} |dt|^{2}
$$
on $V\cap Y_{f\,\text{q-sm}}$.
\qed
\end{no'}

In our main concern, we take those things as follows (in \ref{L1} and \ref{L1orb}, for example).
We will also use a version that is localized over the base $Y$ (in the proof of \ref{can sHm}, for example). 

\begin{no'}\label{intForb2}
Let everything be as in \ref{tKsp}.
We then take objects $E, \sg, \ldots$ in \ref{intForb} as follows.

(1)
We take an orbi-line bundle
$$
	E = \ru{D_{-}} - K_{X/Y} = \ru{D_{-}} - K_{X} + f^{*}K_{Y}.
$$
As $X$ is compact, $E$ is a $\BQ$-line bundle.
We see $(\Del_D+M)-E \sim_\BQ f^*L$.
We take a section 
$$
	\sg=s_{\ru{D_{-}}} 
	\in H^{0}(X,\CO_{X}(\ru{D_{-}})) = H^{0}(X, \CO_{X}(K_{X/Y}+E))
$$ 
whose divisor is $\ru{D_{-}}= \ru{-D}$ (to be more precise the divisor of $s_{\ru{D_{-}}}|_{X_{reg}}$ is $\ru{-D}|_{X_{reg}}$).
We take a (root of) section $s_{\Del_{D}}$ whose divisor is $\Del_{D}$, 
and any Hermitian metric $h_{\Del_{D}}$ on the $\BQ$-line bundle $\Del_{D}$.
Let $h_M$ be a Hermitian metric on the $\BQ$-line bundle $M$, and $\vph$ be a quasi-psh function on $X$.
(A model case is that $\vph$ has a vanishing Lelong number everywhere.)
Let 
$$	
	g = \frac{ h_{\Del_{D}} }{ |s_{\Del_{D}}|_{h_{\Del_{D}}}^{2} } e^{-\vph} h_{M}
$$
be a singular Hermitian metric of $\Del_{D}+M$.

(2)
We take a small local coordinate $V$ of $Y$.
We may suppose $V$ is biholomorphic to a ball in $(\BC^{m}, t=(t_1,\ldots,t_m))$, and $V$ is a relatively compact subdomain in a larger domain on which all of our relevant objects are trivialized.
Denote by $X_{V}= f^{-1}(V)$.
We can suppose $L\sim_{\BQ}0$ on $V$.
We have 
$$
	E_{V}:=E|_{X_{V}}\sim_{\BQ} (\Del_{D}+M)|_{X_{V}}
$$ 
on $X_{V}$.
A singular Hermitian metric on $E_{V}$ induces a singular Hermitian metric on a $\BQ$-line bundle $(\Del_{D}+M)|_{X_{V}}$ and vice versa.
We give $E_{V} (\sim_{\BQ} (\Del_{D}+M)|_{X_{V}})$ a singular Hermitian metric 
$$
	g_{V} = g|_{X_{V}}.
$$
We then have a possibly singular relative $(n,n)$-form 
$
	\w_{V} = g_{V} c_{n}\sg\wed\ol\sg
$
on $X_{V}$, and a function $F_{V}: V \sm Q \to [0,+\infty]$ given by 
$$
	F_{V}(y)=\int_{X_{y}} \w_{V}. \ \
$$ 

(3)
We take a Hermitian metric $h_{E}$ on the $\BQ$-line bundle $E$, and let 
$$
	g_{E} =  h_{E} \ \frac{ e^{-\vph} }{ |s_{\Del_{D}}|_{h_{\Del_{D}}}^{2} }
$$ 
be a singular Hermitian metric on $E$.
Then we have a possibly singular relative $(n,n)$-form 
$\w = g_{E} c_{n}\sg\wed\ol\sg$ on $X$ and a function $F_{\w} : Y \sm Q \to  [0,+\infty]$ given by 
$$
	F_{\w}(y)=\int_{X_{y}} \w.
$$
\qed\end{no'}

If we change a Hermitian metric $h_E$ on $E$ to another $\wtil h_E$, we have another $\wtil F_{\w} : Y \to [0,+\infty]$.
As we have $c^{-1}h_E \le \wtil h_E \le ch_E$ as a Hermitian metric on $E$ for some constant $c\in \BR_{\ge 1}$, we have $c^{-1}F_{\w} \le \wtil F_{\w} \le c F_{\w}$.
In particular, the ``singularities'' of $F_{\w}$ depend only on the zero divisors of $\sg$ and of $s_{\Del_{D}}$, and on the singularities of $\vph$ and of the map $f$.
The difference between (2) and (3) above is the choice of metrics $(E_{V}, g_{V})$:\ semi-global/local over $Y$, or $(E, h_{E})$:\ global.
The singularities of $F_V$ in (2) and $F_\w|_V$ in (3) are ``equivalent''.

The fiber integral $F_{\w}(y)$ and $F_V(y)$ are computed on every orbi-fold local chart of $X$, and are reduced to a computation of fiber integrals on a smooth total space ($\tU_{k}$ as in \ref{intForb}\,(3)).
Thus we have

\begin{thm'}\label{L1orb}
Let everything be as in \ref{tKsp} (with quasi-smooth $X$) and suppose $\Del_{Y}=0$.
Let $\vph$ be a quasi-psh function on $X$ with a vanishing Lelong number everywhere on $X$.
We follow the notations in \ref{intForb2}\,(3).
Let $y_{0}\in Y$ and let $V \subset (\BC^m,t=(t_1,\ldots,t_m))$ be a local coordinate centered at a point $y_0\in Y$.
Then

(1) Upper estimate.
$F_{\w} /|\prod_{j=1}^m t_j|^{c} \in L^1_{loc}(V,0)$ for any $c\in (0,2)$.
In particular, if $-\log F_{\w}$ is quasi-psh on a neighborhood of $y_{0}$,
one has $\nu(-\log F_{\w}, y_{0})=0$.

(2) Lower estimate.
There exist positive constants $\del(<1)$ and $c_{1}$ such that, for any $t\in V \sm Q$ with $|t|<\del$, $F_{\w}(t) \ge c_{1}$ holds.
\end{thm'}

We note that, for every orbi-fold local chart $\pi:\tU\to U$ of $X$, 
$\pi^{*}\vph$ is quasi-psh on $\tU$ and $\nu(\pi^{*}\vph, \wtil x)=0$ for every $\wtil x \in \tU$, see \ref{Le=0}.

\subsection{Comparison of fiber integrals}

We will compare the singularities of a fiber integral $F_{\w}$ on the quasi-smooth total space $X$ in \ref{intForb2}\,(2) and the one on a smooth model $X'$.
Let $\ga:X' \to X$ be a resolution of singularities as in \ref{modif}.
We will study a certain fiber integral $F_{\w'}$ for $f'=f\circ\ga$, and will see the singularities of $F_{\w}$ and $F_{\w'}$ are ``equivalent''.
Curvature properties of $M$ are irrelevant.

\begin{no'}\label{intFsm}
Let everything be as in \ref{tKsp}.
We take a log-resolution $\ga:X'\to X$, and let $K_{X'}+D'=\ga^*(K_X+D)$ as in \ref{modif}.
We then take objects $E, \sg, \ldots$ in \ref{intForb} for $f'=f\circ\ga:X'\to Y$ as follows.
 
(1)
We take
$$
	E' = \ru{D'_{-}} - K_{X'/Y},
$$
which is a line bundle on $X'$.
We have $(\Del_{D'}+M')-E' \sim_\BQ {f'}^*L$.
We note that $E' \ne \ga^* E$ in general, where $E$ is the orbi-line bundle in \ref{intForb2}.
We take a section 
$$
	\sg'=s_{\ru{D'_{-}}} 
	\in H^{0}(X', \ru{D'_{-}}) = H^{0}(X', K_{X'/Y}+E')
$$ 
whose divisor is $\ru{D'_{-}}= \ru{-D'}$.
We take a (root of) section $s_{\Del_{D'}}$ whose divisor is $\Del_{D'}$, and take any Hermitian metric $h_{\Del_{D'}}$ on the $\BQ$-line bundle $\Del_{D'}$.
We take the pull-back metric $h_{M'} = \ga^*h_M$ on the $\BQ$-line bundle  $M'=\ga^*M$, and let $\vph' = \ga^*\vph$ be a quasi-psh function on $X'$, where $h_{M}$ and $\vph$ are the one in \ref{intForb2}\,(1).
Let 
$$	
	g' = \frac{ h_{\Del_{D'}} }{ |s_{\Del_{D'}}|_{h_{\Del_{D'}}}^{2} } e^{-\vph'} h_{M'}
$$
be a singular Hermitian metric of $\Del_{D'}+M'$.
Note that $g' \ne \ga^*g$ in general.

(2)
We take a small local coordinate $V$ of $Y$ as in \ref{intForb}\,(2), and let  $X'_{V}= {f'}^{-1}(V)$.
We have 
$$
	E'_{V}:=E'|_{X'_{V}}\sim_{\BQ} (\Del_{D'}+M')|_{X'_{V}}
$$ 
on $X'_{V}$.
We give $E'_{V} (\sim_{\BQ} (\Del_{D'}+M')|_{X'_{V}})$ a singular Hermitian metric 
$$
	g'_{V} = g'|_{X'_{V}}.
$$
We have a possibly singular relative $(n,n)$-form 
$
	\w'_{V} = g'_{V} c_{n}\sg'\wed\ol\sg'
$
on $X'_{V}$.
Denote $Y_{f'\,\text{sm}}=\{y\in Y;\ \text{$f'$ is smooth over $y$}\}$.
We consider a function 
$$
	F'_{V}: V \sm Y_{f'\,\text{sm}} \to [0,+\infty], \ \ 
	F'_{V}(y)=\int_{X'_{y}} \w'_{V}. 
$$ 
\qed\end{no'}

As we do not know if $f' : X'\to Y$ is well-prepared, we can not estimate $F'_{V}$ directly as in \ref{L1}.

\begin{lem'}\label{equiv}
Take a small local coordinate $(V,t=(t_{1},\ldots,t_{m}))$ of $Y$ as in \ref{intForb2}\,(2).
We obtain fiber integrals $F_{V}$ as in \ref{intForb2}\,(1) and (2), and $F'_{V}$ as in \ref{intFsm}.

Then, there exists a constant $c\in \BR_{\ge 1}$ such that $c^{-1}F_{V} \le F'_{V} \le cF_{V}$ as a function on $(V \sm Q) \cap Y_{f'\,\text{\rm sm}}$.
In particular, $F'_V$ also satisfies \ref{L1} in an appropriate sense.
\end{lem'}

\begin{proof}
We will see that our claim is exactly a consequence of the formula $K_{X'}+D' = \ga^{*} (K_{X}+D)$.
It is important to note that the integrand $\w'_V$ of $F'_{V}$ (to be more precise $\w'_V \wed {f'}^*|dt|^2$) is defined on $X'_{V}={f'}^{-1}(V)$, not only on $X'_{V} \sm Y_{f'\,{\rm sm}}$.

We cover $X'_{V}= {f'}^{-1}(V)$ by a finite number of local charts $U'_{\ell}$ such that each $\ga(U'_{\ell})$ contained in an orbi-fold local chart of $X$ (as in \ref{intForb}). 
Let $U' \subset X$ be one of $\{U'_{\ell}\}$, and let $\pi : \tU \to U=\tU/G$ be an orbi-fold local chart of $X$ with $\ga(U') \subset U$.
We denote $V_0 = (V \sm Q) \cap Y_{f'\,\text{\rm sm}}$.

We will denote $\w, \w', F_\w$ and $F_{\w'}$ instead of $\w_V, \w'_V, F_V$ and $F'_{V}$ respectively.
We compare $(m,m)$-forms $F_{\w}|dt|^{2}$ and $F_{\w'}|dt|^{2}$ on $V_{0}$.
We have a possibly singular $(n+m,n+m)$-form $\w' \wed {f'}^{*} |dt|^{2}$ on $X'_{V}={f'}^{-1}(V)$ (not only on ${f'}^{-1}(V_{0}$)).
We have $F_{\w'}|dt|^{2} = f'_{*}(\w' \wed {f'}^{*} |dt|^{2})$ by the fiberwise integrations on which $f'$ is smooth.
Following the notations in Step 3 in the proof of \ref{L1} (where the total space is smooth), we have $\sg'({f'}^{*}dt)|_{U'} = \sg'_{U'} dx'$ for some $\sg'_{U'}\in H^{0}(U',\CO_{X'}) (\cong H^{0}(U',E'))$, where $E' = \ru{D'_{-}} - K_{X'/Y}$ in \ref{intFsm} and where $dx'$ is a local frame of $K_{X'}|_{U'}$.
Then
$$
	\w' \wed {f'}^{*} |dt|^{2}
	= A_{1}' \frac{ e^{-\vph'} }{ |s_{\Del_{D'}}|^{2} } \big| \sg'({f'}^{*} dt) \big|^{2}
	= A_{2}'  e^{-\vph'}  \frac{ |\sg'_{U'}|^{2} }{ |s_{\Del_{D'}}|^{2} } |dx'|^{2}
$$
on $U'$, where $A_{1}', A_{2}' \in C^{\infty}(\ol {U'}, \BR_{>0})$ which reflect local trivializations.
We have a similar expression for $F_{\w}|dt|^{2}$ formally, namely 
$$
	\w \wed {f}^{*} |dt|^{2}
	= A_{1} e^{-\vph} \frac{ |\sg_{U}|^{2} }{ |s_{\Del_{D}}|^{2} } |dx|^{2}
$$
on $U$, where $A_{1} \in C^{\infty}(\ol {U}, \BR_{>0})$. 
However as $K_{X}$ is merely $\BQ$-Cartier, we do not have a precise meaning of $dx$ (a local frame of $K_{X}$), nor $\sg_{U} \in H^{0}(U,\CO_{X})$.
Things are justified after passing to a local orbi-fold covering or passing to a high power to make every divisor to be Cartier.
As we need to compare with objects on $X'$, we adopt the latter way.
We take a large and divisible integer $p$, so that $pK_{X}$ is Cartier, for example, and take a local frame $k_{U,p} \in H^{0}(U,pK_{X})$.
Then $|k_{U,p}|^{2/p}$ can be seen as a positive measure on $U$.
To be more precise, we take a reference Hermitian metric $h_{p}$ on a line bundle $\CO_{X}(pK_{X})$, and put $|k_{U,p}|^{2}=|k_{U,p}|_{h_{p}}^{2} \cdot h_{p}^{-1}$ (as usual).
Alternatively, thinking $\pi^{*}k_{U,p} \in H^{0}(\tU, pK_{\tU})$, $|\pi^{*}k_{U,p}|^{2/p}$ is a positive measure on $\tU$ and is $G$-invariant, and hence it induces $|k_{U,p}|^{2/p}$ on $U$.
Let us look at $\sg^{p}\in H^{0}(X, pK_{X/Y}+pE) = {\rm Hom}_{\CO_{X}}(f^{*}pK_{Y}, pK_{X}+pE)$.
We have $\sg^{p}(f^{*}dt^{\ot p})|_{U} = \sg_{U,p}k_{U,p}$ for some $\sg_{U,p}\in H^{0}(U,\CO_{X}) (\cong H^{0}(U,pE))$.
Then 
$$
	(\w \wed {f}^{*} |dt|^{2})^{p}
	= A_{2} e^{-p\vph} \frac{ |\sg_{U,p}|^{2} }{ |s_{\Del_{D}}|^{2p} } 
	|k_{U,p}|^{2},
$$
where $A_{2} \in C^{\infty}(\ol {U}, \BR_{>0})$. 

We can write $\ga^{*}k_{U,p} = J_{\ga,p} (dx')^{\ot p}$ for some meromorphic function $J_{\ga,p}$ on $U'$ whose divisor is $\divisor J_{\ga,p}=pK_{X'/X}|_{U'}$. 
(We need to take a power $p$ to formulate this relation.)
In view of the formula of change of variables, we write as
$|\ga^{*} k_{U,p}|^{2/p}= |J_{\ga,p}|^{2/p} |dx'|^{2} $.
Let us look at the ratio on $U'$:
$$
	\frac{ (\w' \wed {f'}^{*} |dt|^{2})^{p} }{ \ga^{*}(\w \wed {f}^{*} |dt|^{2})^{p} }
	= ({A_{2}'}^{p} / \ga^{*} A_{2})
	 \frac{ |\sg'_{U'}|^{2p} }{ |s_{\Del_{D'}}|^{2p} } \cdot
	 \frac{ \ga^{*} |s_{\Del_{D}}|^{2p} } { \ga^{*}|\sg_{U,p}|^{2} }
	\cdot \frac{ |dx'|^{2p} }{ |\ga^{*} k_{U,p}|^{2} }.
$$
Note that the right hand side has no ``zeros'' or ``poles''.
In fact, the ``divisor'' of the right hand side is $-pD' + \ga^{*}pD - pK_{X'/X} = 0$, because of $K_{X'}+D'=\ga^{*}(K_{X}+D)$.
Here, we used that $\divisor \sg'_{U'} - \divisor s_{\Del_{D'}}
= \ru{D'_{-}} - (D'_{+} + \ru{D'_{-}} - D'_{-}) = - D'$ on $U'$, 
and similarly $\divisor \sg_{U,p} - \divisor s_{\Del_{D}}^{p}
= - pD$ on $U$.
Hence we have 
$$
	\frac{ \w' \wed {f'}^{*} |dt|^{2})^{p} }{ \ga^{*}(\w \wed {f}^{*} |dt|^{2})^{p} }
	\in C^{\infty}(\ol{U'},\BR_{>0}),
$$ 
and 
$\w' \wed {f'}^{*} |dt|^{2} = A \ga^{*}(\w \wed {f}^{*} |dt|^{2})$ for some $A \in C^{\infty}(\ol{U'},\BR_{>0})$ pointwise on each $U'=U'_{\ell}$.
Thus we have $c\in \BR_{\ge 1}$ such that
$$
	c^{-1} \ga^{*}(\w \wed {f}^{*} |dt|^{2})
	\le \w' \wed {f'}^{*} |dt|^{2} 
	\le c \ga^{*}(\w \wed {f}^{*} |dt|^{2}) 
$$
pointwise on $X'_{V}={f'}^{-1}(V)$.
Then, by taking the push-forward $f'_{*} = f_{*} \circ \ga_{*}$ over $V_0$, namely taking fiberwise integrals, we have
$$
	c^{-1} F_{\w}|dt|^{2}
	\le F_{\w'}|dt|^{2}
	\le c F_{\w}|dt|^{2}
$$
on $V_0 = (V \sm Q) \cap Y_{f'\,\text{\rm sm}}$.
\end{proof}


\section{Semi-positivity of the moduli part}

We shall prove \ref{tKsp}.
We construct a ``canonical'' singular Hermitian metric on the $\BQ$-line bundle $M_{Y}=L$.
We adapt a relative Bergman kernel construction of adjoint type bundles, developed in \cite{BP}, \cite{BP10}, \cite{PT}, \cite{CH} for it.
Theorem \ref{tKsp} can be stated more concretely as in \ref{can sHm} on a smooth model of $X$.
We follow \cite[\S 3]{CH} for a general outline of the proof. 

\begin{setup}\label{mKsp}
We follow the construction in \ref{modif}.

(1)
Starting from the situation in \ref{tKsp}, we take a log-resolution $\ga:X'\to X$ of the pair $(X,P)$, and put $f'=f\circ \ga : X' \to Y$.
We write as $	K_{X'}+D' = \ga^{*}(K_{X}+D)$ for a $\BQ$-divisor $D'$ on $X'$.
The pair $(X',D')$ satisfies (1)--(3) in \ref{modif}.
In particular, 
$$
	K_{X'}+D'+M' \sim_{\BQ} {f'}^{*}(K_{Y}+L),
$$ 
where $M'=\ga^{*}M$.
We set $D' = \sum_{j} d'_{j} P'_{j}$, and consider the discriminant part $\Del_{Y}'$ and the moduli part $M_{Y}' = L - \Del_{Y}'$ for $f'$ and $Q$.
By \ref{modif}, $\Del_{Y}' = \Del_{Y}$ and $M_{Y}' = M_{Y}$.
As we can suppose that $\Del_{Y}=0$, we have $\Del_{Y}'=0$ and $M_{Y}' = L$.

(2)
Let $\Sigma'\subset Y$ be the minimum Zariski closed subset such that $Y\sm \Sigma' = \{y\in Y;\ f'$ is smooth over $y$ and $h^0(X'_y, \ru{D'_-})=1 \}$.

(3)
By our assumption, there exist a Hermitian metric $h_{M}$ on $M$ and a quasi-psh function $\vph$ on $X$ such that a singular Hermitian metric $e^{-\vph} h_{M}$ of $M$ has semi-positive curvature and that $\vph$ has a vanishing Lelong number everywhere on $X$.
Let $h_{M'}=\ga^*h_M$ and $\vph'=\ga^*\vph$.
We write $D'=\Del_{D'} - \ru{D'_{-}}$ as in \ref{nota1}.
Let $h_{\Del_{D'}}$ be a Hermitian metric on $\Del_{D'}$, and $s_{\Del_{D'}}$ be a root of section whose divisor is $\Del_{D'}$.
We let 
$$
	g' = \frac{ h_{\Del_{D'}} }{ |s_{\Del_{D'}}|_{h_{\Del_{D'}}}^{2} }
	 e^{-\vph'}h_{M'}
$$ 
be the induced singular Hermitian metric on $\Del_{D'}+M'$ with semi-positive curvature.
\qed\end{setup}

Noting the relation $K_{X'/Y}+D'+M'=K_{X'/Y}+\Del_{D'}+M' - \ru{D'_{-}}$ in mind, we first consider an adjoint type $\BQ$-line bundle $K_{X'/Y}+\Del_{D'}+M'$.
We note that $\CI(g')=\CO_{X'}$, by \ref{open}, as the pair $(X',\Del_{D'})$ is klt and $\nu(\vph',x')=0$ for any $x'\in X'$.

\begin{thm}\label{can sHm}
Let everything be as in \ref{mKsp}.

(1) The $\BQ$-line bundle $K_{X'/Y}+\Del_{D'}+M'$ admits a singular Hermitian metric $b^{-1}$ whose curvature current satisfies $\frac{\ai}{2\pi}\Theta_{b^{-1}} \ge [\,\ru{D'_{-}}\,]$, where  $[\,\ru{D'_{-}}\,]$ is the current of integration over the effective cycle $\ru{D'_{-}}$.
More precisely, the restriction $b|_{X'_{y}}$ is the Bergman kernel metric on $-(K_{X'/Y}+\Del_{D'}+M')|_{X'_{y}}$ with respect to $(\Del_{D'}+M', g')|_{X'_{y}}$ for every $y \in Y \sm \Sigma'$.

(2) The $\BQ$-line bundle ${f'}^{*}L$ admits a singular Hermitian metric $\wtil h$ with semi-positive curvature, which is canonically constructed via the isomorphism ${f'}^{*}L \sim_{\BQ} K_{X'/Y}+\Del_{D'}+M' - \ru{D'_{-}}$ and $b^{-1}$.
The corresponding singular Hermitian metric $h_{Y}$ on $L$ (such that ${f'}^{*}h_{Y}= \wtil h$) has semi-positive curvature.

(3) 
Let us denote, on every sufficiently fine local chart $V$ on $Y$, $h_{Y}|_{V} = e^{-\psi_{V}}$ with $\psi_{V} \in L^{1}_{loc}(V,\BR)$ and $\psi_{V}$ is psh. 
Then $h_{Y}$ is given by a fiber integral for $f'$ as in \ref{intFsm}:\ $h_{Y} = F'_{V}$ on $V \sm \Sigma'$.
The function $-\log F'_{V}$ on $V \sm \Sigma'$ extends to a psh function $\psi_{V}$ on $V$ as a consequence.

 (4) 
(We now use the assumption that $\vph$ has a vanishing Lelong number everywhere on $X$.)
In the setting of (3) above,  one has $\nu(\psi_{V},y)=0$ for every $y \in V$.
In particular, $L$ is nef by \ref{L0nef}.
\end{thm}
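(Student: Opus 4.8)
The plan is to endow $L$ with the singular Hermitian metric produced by the relative $L^2$ (Bergman kernel) construction of adjoint type, developed in \cite{BP}, \cite{BP10}, \cite{PT}, \cite{CH}, applied to the $\BQ$-line bundle $K_{X'/Y}+\Del_{D'}+M'$, and then to extract its analytic properties from the fiber-integral estimates \ref{L1orb} and \ref{equiv}. First, for $y\in Y\sm\Sigma'$ I would observe that, since $(X',\Del_{D'})$ is klt and $\nu(\vph',x')=0$ for every $x'\in X'$, \ref{open} gives $\CI(g')=\CO_{X'}$ and fiberwise $\CI(g'|_{X'_y})=\CO_{X'_y}$; together with the rank-one condition \ref{modif}(3) and the identity $K_{X'/Y}+E'=\ru{D'_{-}}$ this shows $H^0(X'_y,(K_{X'/Y}+\Del_{D'}+M')|_{X'_y})$ is one-dimensional, spanned by $\sg'|_{X'_y}$, with finite positive $L^2$-norm $F'_V(y)$ for $g'|_{X'_y}$ (the fiber integral of \ref{intFsm}). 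Let $b|_{X'_y}$ be the Bergman kernel metric on $-(K_{X'/Y}+\Del_{D'}+M')|_{X'_y}$ with respect to $(\Del_{D'}+M',g')|_{X'_y}$; on this one-dimensional space it is $|\sg'|^2/F'_V(y)$. Via the $\BQ$-linear equivalence ${f'}^*L\sim_\BQ(K_{X'/Y}+\Del_{D'}+M')-\ru{D'_{-}}$ I would write $b^{-1}=\wtil h\ot h_{D'_{-}}$, where $h_{D'_{-}}$ is the singular metric on $\CO(\ru{D'_{-}})$ attached to the section $\sg'$ (whose curvature current is $[\,\ru{D'_{-}}\,]$ by Poincar\'e--Lelong), so that $\wtil h$ is a metric on ${f'}^*L$ with weight ${f'}^*(-\log F'_V)$ over ${f'}^{-1}(Y\sm\Sigma')$. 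By the positivity of relative Bergman kernels / direct images of adjoint type---applicable because $g'$ has semi-positive curvature and $\CI(g')=\CO_{X'}$---$\wtil h$ has semi-positive curvature over $Y\sm\Sigma'$, i.e.\ $-\log F'_V$ is psh on $Y\sm\Sigma'$.

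Next I would handle the extension and descent. The set $\Sigma'$ is a proper Zariski closed subset of $Y$ (generically $f'$ is smooth and $h^0(X'_y,\ru{D'_{-}})=1$), so $\codim\Sigma'\ge1$. By the lower estimate \ref{L1orb}(2), transported to $F'_V$ through \ref{equiv}, $F'_V$ has a positive lower bound near every point of $V$; hence $-\log F'_V$ is locally bounded above on $V\sm\Sigma'$ and, by the Riemann-type extension theorem for psh functions (any psh function on $V\sm\Sigma'$ locally bounded above near $\Sigma'$ extends to a psh function on $V$), it extends to a psh function $\psi_V$ on $V$. Consequently $\wtil h$, and with it $b^{-1}=\wtil h\ot h_{D'_{-}}$, extends across ${f'}^{-1}(\Sigma')$ as a semi-positively curved singular metric with $\frac{\ai}{2\pi}\Theta_{b^{-1}}=\frac{\ai}{2\pi}\Theta_{\wtil h}+[\,\ru{D'_{-}}\,]\ge[\,\ru{D'_{-}}\,]$, which is part (1). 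Since the weight of $\wtil h$ is constant along the fibres of $f'$ and $f'$ has connected fibres, $\wtil h$ descends to a singular Hermitian metric $h_Y$ on $L$ with ${f'}^*h_Y=\wtil h$, semi-positively curved, with $h_Y=F'_V$ on $V\sm\Sigma'$ and local weight $\psi_V$; this gives parts (2) and (3).

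For part (4) I would fix any $y_0\in V$, take $(V,t=(t_1,\dots,t_m))$ centered at $y_0$, and invoke the upper estimate \ref{L1orb}(1), transported to $F'_V$ by \ref{equiv}: $F'_V/|\prod_{j=1}^m t_j|^{c}\in L^1_{loc}(V,0)$ for every $c\in(0,2)$. Since $e^{-\psi_V}=F'_V$ off the measure-zero set $\Sigma'$, this reads $e^{-\psi_V}e^{-cu}\in L^1_{loc}$ for $u=\log|\prod_{j=1}^m t_j|$ and every $c\in(0,2)$. Applying \ref{nu0} with $n=m=\dim Y$, $k=2$ and this $u$ (for which $\nu(u,0)=m$) yields $\nu(\psi_V,y_0)=0$; as $y_0$ and $V$ are arbitrary, $\psi_V$ has vanishing Lelong number everywhere on $Y$, and therefore $L=M_Y$ is nef by \ref{L0nef}.

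The hard analytic content---the twisted upper and lower estimates for the fiber integral and the comparison between the orbifold total space $X$ and its resolution $X'$---is already supplied by \ref{L1orb} and \ref{equiv}, so the main obstacle here is organizational: one must set up the adjoint Bergman kernel metric on a model $X'$ that need not be well prepared, verify that it genuinely descends to $L$ (so that Bergman kernel positivity yields semi-positivity of $h_Y$), and identify its weight with $-\log F'_V$ so that those estimates apply. The two delicate bookkeeping points will be the current bound $\frac{\ai}{2\pi}\Theta_{b^{-1}}\ge[\,\ru{D'_{-}}\,]$ in (1), via Poincar\'e--Lelong for $\sg'$, and the extension of $\psi_V$ across $\Sigma'$, which rests on the lower estimate.
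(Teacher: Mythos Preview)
Your approach is correct and is the paper's: build the relative $L^{2}$/Bergman kernel metric via \cite{BP}, \cite{PT}, \cite{CH}, identify its local weight with $-\log F'_{V}$, invoke the lower estimate \ref{L1orb}\,(2) through \ref{equiv} to extend $\psi_{V}$ across $\Sigma'$, and the upper estimate \ref{L1orb}\,(1) through \ref{equiv} together with \ref{nu0} for the vanishing Lelong numbers. One minor inaccuracy: the fiberwise assertion $\CI(g'|_{X'_{y}})=\CO_{X'_{y}}$ for \emph{every} $y\in Y\sm\Sigma'$ does not follow from \ref{open}, since Lelong numbers can jump up under restriction to a fiber (and $\vph'|_{X'_{y}}$ may even be $\equiv -\infty$); the paper absorbs this into a further pluri-polar set $\Sigma_{3}\supset\Sigma'$ on which $F'_{V}=+\infty$, but as the Bergman kernel positivity of \cite{BP} holds without this hypothesis and $\{F'_{V}=+\infty\}$ is just the polar locus where $-\log F'_{V}=-\infty$, your extension and Lelong-number arguments go through unchanged.
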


\begin{proof}
{\it Step 1. Preliminary discussion}.
We let, as in \ref{intFsm},
$$
	E' = \ru{D'_{-}} - K_{X'/Y},
$$
which is a line bundle on $X'$.
We have $K_{X'/Y}+E'=\ru{D'_{-}}$ and $E' \sim_{\BQ} \Del_{D'}+M' - {f'}^{*}L$.
In particular, we can regard $E'$ to be pseudo-effective modulo ${f'}^{*}L$ and $\ru{D'_{-}}$ to be its adjoint line bundle.
We take an auxiliary Hermitian metric $h_{K'}$ of $K_{X'/Y}$, and put
$$
	h=h_{K'} h_{\Del_{D'}} h_{M'}
$$ 
a Hermitian metric on $K_{X'/Y}+ \Del_{D'}+M'$ with the curvature form
$$
	\theta = \frac{\ai}{2\pi}\Theta_{h}
	\in c_{1}(K_{X'/Y}+ \Del_{D'}+M').
$$
The curvature current of $g'$ on $\Del_{D'}+M'$ is
$$
	\frac{\ai}{2\pi} \Theta_{ g' } 
	= [\Del_{D'}] + \frac{\ai}{2\pi} \Theta_{e^{-\vph'}h_{M'}} \ge 0,
$$ 
where $[\Del_{D'}]$ stands for the current of integration over the effective $\BQ$-cycle $\Del_{D'}$.

We will consider restrictions of those $\BQ$-line bundles and metrics $g'$ and $e^{-\vph'}$ to fibers.
There are some fibers on which such restrictions do not behave well.
There are proper Zariski closed subsets $\Sigma_{1}, \Sigma_{2} \subset Y$ and pluri-polar subsets $\Sigma_{3}, \Sigma_{4} \subset Y$ with the following properties (o)--(iv).

\smallskip
(o) 
$\Sigma_{1}\subset\Sigma_{2}\subset\Sigma_{3} \subset \Sigma_{4} \subset Y$.
We actually have $\Sigma_2=\Sigma'$ in the statement.

\smallskip
(i) Let $\Sigma_{1}=Y \sm Y_{f'\,\text{sm}}$.
In particular, $f'$ is smooth over $Y \sm \Sigma_{1}$.

\smallskip
(ii) Let $\Sigma_{2}' = \{y \in Y ;\ h^{0}(X'_{y}, \ru{D'_{-}}) \ne 1 = {\rm rank}\,f_{*}\CO_{X'}(\ru{D'_{-}})\}$, 
and $\Sigma_{2}=\Sigma_{2}' \cup \Sigma_{1}$.
As $\CI(g') =\CO_{X'}$, we see that, if $y \in Y \sm \Sigma_{2}$, then $h^{0}(X'_{y}, \ru{D'_{-}}) = 1$ and $\CI(g')$ is flat over $y$, which is suffice to apply the base change theorem over $Y \sm \Sigma_{2}$.


\smallskip
(iii) Let $\Sigma_{3}' = \{y \in Y \sm \Sigma_{2};\ 
H^{0}(X'_{y}, \ru{D'_{-}} \ot \CI(g'|_{X'_{y}})) \subsetneqq H^{0}(X'_{y}, \ru{D'_{-}} \ot \CI(g')|_{X'_{y}})  \}$,
and $\Sigma_{3}=\Sigma_{3}' \cup \Sigma_{2}$.
We know $\CI(g') = \CO_{X'}$, but we use the notation $\CI(g')$ to remind the restriction property of multiplier ideal sheaves.
In particular, if $y \in Y\sm \Sigma_{2}$ and $g'|_{X'_{y}} \equiv + \infty$, then $y \in \Sigma_{3}'$.
We have also $h^{0}(X'_{y}, \ru{D'_{-}} \ot \CI(g'|_{X'_{y}}))=0$ and $h^{0}(X'_{y}, \ru{D'_{-}} \ot \CI(g')|_{X'_{y}})=1$ for $y\in \Sigma_{3}'$.
The set $\Sigma_{3}$ is pluri-polar in $Y$ (see (iv) below).

\smallskip
(iv) We set $\Sigma_{4}':=\{y \in Y\sm \Sigma_{2};\ \CI(g'|_{X'_{y}}) \subsetneqq \CI(g')|_{X'_{y}}=\CO_{X'_{y}}  \}$, and
$\Sigma_{4}=\Sigma_{4}' \cup \Sigma_{2}$.
This set $\Sigma_{4}$ is pluri-polar in $Y$ by \cite[2.9]{Xi}.
In particular, $\Sigma_{4}$ is a set of Lebesgue measure zero in $Y$ (this property has been observed in \cite{MZ}).
We see $\Sigma_{3}' \subset \Sigma_{4}'$. 
Hence $\Sigma_{3}$ is also pluri-polar in $Y$ (which is actually the content of \cite[Step 2.6.4]{Xi}).
The role of $\Sigma_{4}$ is auxiliary.

\smallskip

{\it Step 2. Localization over the base}.
Let $\{Y_{i}\}_{i\in I}$ be a locally finite open cover of $Y$ such that every $Y_{i}$ is small enough and biholomorphic to a ball in $\BC^{m}, m= \dim Y$.
Denote by $X'_{i}= {f'}^{-1}(Y_{i})$.
We can suppose $L\sim_{\BQ}0$ on every $Y_{i}$.
Hence, on every $X'_{i}$, we have 
$$
	E'_{i}:=E'|_{X'_{i}}\sim_{\BQ} (\Del_{D'}+M')|_{X'_{i}},
$$
and $K_{X'/Y}+E'_{i} \sim_{\BQ} K_{X'/Y}+\Del_{D'}+M'$ on $X'_{i}$.
A singular Hermitian metric on $E'_{i}$ (resp.\ $K_{X'/Y}+E'_{i}$) induces a singular Hermitian metric on a $\BQ$-line bundle $(\Del_{D'}+M')|_{X'_{i}}$ (resp.\ $K_{X'/Y} +\Del_{D'}+M'$ on $X'_{i}$) and vice versa.
Via $E'_{i} \sim_{\BQ} (\Del_{D'}+M')|_{X'_{i}}$, we can regard 
$$
	h_{i}:=h_{K'} h_{\Del_{D'}} h_{M'}|_{X'_{i}} = h|_{X'_{i}}
$$ 
as a Hermitian metric on $K_{X'/Y}+E'_{i}$ with $\frac{\ai}{2\pi}\Theta_{h_{i}}=\theta$ on $X'_{i}$.
We also give $E'_{i} (\sim_{\BQ} (\Del_{D'}+M')|_{X'_{i}})$ a singular Hermitian metric 
$$
	g'_{i} = g'|_{X'_{i}}.
$$
We have $\frac{\ai}{2\pi} \Theta_{g'_{i}} = [\Del_{D'}|_{X'_{i}}] + \frac{\ai}{2\pi}  \Theta_{e^{-\vph'}h_{M'}}  \ge 0$ and $\CI(g'_{i}) = \CO_{X'_{i}}$.

We can see that $\{h_{i}/g'_{i}\}_{i\in I}$ gives a global singular Hermitian metric on $K_{X'/Y}$, i.e., $h_{i}/g'_{i}=h_{j}/g'_{j}$ on $X'_{i}\cap X'_{j}$.
In fact, on each $X'_{i}$, 
$$
	\frac{h_{i}}{g'_{i}} 
	= h_{K'} h_{\Del_{D'}} h_{M'}
	  \Big( \frac{ h_{\Del_{D'}} }{ |s_{\Del_{D'}}|_{h_{\Del_{D'}}}^{2} } e^{-\vph'}h_{M'}  \Big)^{-1}
	= h_{K'}|s_{\Del_{D'}}|_{h_{\Del_{D'}}}^{2} e^{\vph'}.
$$

\smallskip
{\it Step 3. Fiberwise normalization}.
We take a section 
$$
	\sg'= s_{ \ru{D'_{-}} } 
	\in H^{0}(X', \ru{D'_{-}}) = H^{0}(X', K_{X'/Y}+E')
$$ 
whose divisor is $\ru{D'_{-}}$. 
By the base change theorem, we have
 $H^{0}(X'_{y}, K_{X'/Y}+E'_{i}) = \BC \sg'_{y}$ with $\sg'_{y}:=s_{ \ru{D'_{-}} }|_{X'_{y}}$ for $y\in Y_{i} \sm \Sigma_{2}$. 
For every $y\in Y_i \sm \Sigma_{2}$, we consider its $L^{2}$-norm with respect to $g'_{i}$, namely the fiber integral in our context:
$$
	\| \sg'_{y} \|_{g'_{i}}^{2} = F'_{i}(y)
	= \int_{X'_{y}} g'_{i} c_{n} \sg'_{y}\wed \ol{\sg'}_{y} \in (0,+\infty].
$$
This $\| \sg'_{y} \|_{g'_{i}} < +\infty$ if and only if 
$\sg'_{y} \in H^{0}(X'_{y}, \ru{D'_{-}} \ot \CI(g'|_{X'_{y}}))$. 
We can also see that, for $y \in Y_{i}\sm \Sigma_{2}$, $\| \sg'_{y} \|_{g'_{i}} = +\infty$ if and only if $y \in \Sigma_{3}'$.

For $y\in Y_{i}\sm \Sigma_{3}$ (namely $\| \sg'_{y} \|_{g'_{i}} < +\infty$), there exists $\tau_{i,y} \in H^{0}(X'_{y}, \ru{D'_{-}})$ such that
$$
	\int_{X'_{y}} g'_{i} c_{n} \tau_{i,y}\wed \ol {\tau_{i,y}} =1.
$$
Such $\tau_{i,y}$ is unique up to a multiplication of a complex number of unit norm.
In fact, we can take 
$$
	\tau_{i,y} := \frac{ \sg'_{y} \ }{  \| \sg'_{y} \|_{g'_{i}}  }
	\in H^{0}(X'_{y}, K_{X'/Y}+E'_{i})
$$
for $y \in Y_{i}\sm \Sigma_{3}$.
We take $\tau_{i,y}=0 \in H^{0}(X'_{y}, K_{X'/Y}+E'_{i})$ when $y \in Y_{i}\cap (\Sigma_{3}\sm \Sigma_{2})= Y_{i} \cap \Sigma_{3}'$ (namely $\| \sg'_{y} \|_{g'_{i}} = +\infty$).
Hence, the formulation $\tau_{i,y} := \frac{ \sg'_{y} \ }{  \| \sg'_{y} \|_{g'_{i}} }$ is still valid formally for any $y \in Y_{i} \sm \Sigma_{2}$.

\smallskip

{\it Step 3'. Role of fiber integral.}
We have a relative $(n,n)$-from $\w'_{i}=g'_{i}c_{n}\sg' \wed \ol{\sg'}$ on $X'_{i}$ and the fiber integral
$$
	F'_{i}(y) = \int_{X'_{y}} \w'_{i} \in [0,+\infty]
$$
for $y \in Y_{i} \cap Y_{\text{$f'$\,sm}}$.
We note that the integrand $\w'_{i}$ is defined on $X'_{i}$, not only on $X'_{i}
\sm {f'}^{-1}(\Sigma_{2})$.
	
\smallskip

{\it Step 4. Fiberwise Bergman kernel metric}.
We now apply \cite{BP} (\cite{PT}) for $f:X'_{i}\to Y_{i}$ and $K_{X'/Y}+E'_{i}$ with respect to $(E'_{i},g'_{i})$. 

We let $\tau_{i}=\{ \tau_{i,y} \}_{y\in Y_{i} \sm \Sigma_{2}}$ be a family of fiberwise holomorphic sections parametrized by $Y_{i} \sm \Sigma_{2}$.
We define an $\BR_{\ge 0}$-valued function $\gb_{i}$ on $X'_{i}\sm {f'}^{-1}(\Sigma_{2})$ by 
$$
	\gb_{i}(x') := |\tau_{i}|_{h_{i}}^{2}(x') = |\tau_{i,y}|_{h_{i}}^{2}(x')
$$ 
for $x' \in X'_{y}$ with $y=f'(x') \in Y_{i} \sm \Sigma_{2}$. 
This $\gb_{i}$ takes a value 0 exactly along $(\Supp D'_{-} \cup {f'}^{-1}(\Sigma_{3})) \cap (X'_{i}\sm {f'}^{-1}(\Sigma_{2}))$.
We let 
$$
	b_{i}=\gb_{i} h_{i}^{-1}. 
$$
On each fiber $X'_{y}$ for $y \in Y_{i} \sm \Sigma_{2}$ (even if $y \in \Sigma_{3}$), $b_{i}|_{X'_{y}}$ is a singular Hermitian metric on $(K_{X'_{y}}+E'_{i}|_{X'_{y}})^{*}$,  which is called the Bergman kernel metric with respect to the Hermitian line bundle $(E'_{i},g'_{i})|_{X'_{y}}$. 
We have $b_{i}|_{X'_{y}} \equiv 0$ for $y \in Y_{i}\cap\Sigma_{3}'$ by construction.
We consider its dual  
$$
	b_{i}^{-1} = \frac{h_{i}}{ |\tau_{i}|_{h_{i}}^{2}}
	= h_{i} \, e^{- \psi_{i}} \
	\text{ with } \ \psi_{i} 	= \log |\tau_{i}|_{h_{i}}^{2}
$$
defined on $X'_{i}\sm {f'}^{-1}(\Sigma_{2})$.
We note that $b_{i}$ does not depend on the choice of a reference metric $h_{i}$ but depends (only) on $g'_{i}$.

Then by \cite[0.1]{BP} (see also \cite[4.1.1]{PT}), on every local chart  of $X'_{i}$, a function $\log b_{i}(\sim \psi_{i})$ is psh on $X'_{i}\sm {f'}^{-1}(\Sigma_{2})$ and uniformly bounded from above around ${f'}^{-1}(\Sigma_{2})$.
(The last uniform upper-boundedness of $\psi_{i}$ also follows from \ref{equiv} and \ref{L1orb}\,(2) in our setting. 
See Step 5 below.)
Hence, by a Riemann type extension for psh functions $b_{i}^{-1}$ extends as a singular Hermitian metric of $K_{X'/Y}+E'_{i}$ on $X'_{i}$ (we still denote it by $b_{i}^{-1}= h_{i} \, e^{- \psi_{i}}$) with semi-positive curvature
$$
	\theta + dd^{c} \psi_{i} \ge 0
$$
on $X'_{i}$, where $dd^{c}=\frac{\ai}{2\pi}\rd\rdb$.
We regard $\psi_{i}$ as a quasi-psh function on $X'_{i}$ from now on.
Via $K_{X'/Y}+\Del_{D'}+M' \sim_{\BQ} K_{X'/Y}+E'_{i}$ on $X'_{i}$, we regard $b_{i}^{-1}$  as a singular Hermitian metric of $K_{X'/Y}+\Del_{D'}+M'$ on $X'_{i}$.

\smallskip
{\it Step 5. Singularities of the dual of the Bergman kernel metric}.
We note that a simple relation
$$
	e^{-\psi_{i}}  | s_{\ru{D'_{-}}} |_{h_{i}}^{2} (x')
	= \frac{ F'_{i}(f'(x')) }{ | s_{\ru{D'_{-}}}(x') |_{h_{i}}^{2} }
		| s_{\ru{D'_{-}}}(x') |_{h_{i}}^{2}
	= F'_{i}(f'(x'))
$$
holds for any $x' \in X'_{i}\sm {f'}^{-1}(\Sigma_{2})$. 
In particular, thanks to the lower bound in \ref{equiv} and \ref{L1orb}\,(2), a uniform upper bound 
$$
	\psi_{i} \le \log | s_{\ru{D'_{-}}} |_{h_{i}}^{2} + \log c_1^{-1} 
$$
holds on $X'_{i} \sm {f'}^{-1}(\Sigma_{2}\cup Q)$ for a constant $c_{1}>0$.
As both $\psi_{i}$ and $\log | s_{\ru{D'_{-}}} |_{h_{i}}^{2}$ are quasi-psh on $X'_{i}$, the relation $\psi_{i} \le \log | s_{\ru{D'_{-}}} |_{h_{i}}^{2} + \log c_1^{-1}$ still holds on $X'_{i}$. 

There would be several ways to see the last property: $\psi_{i} \le \log | s_{\ru{D'_{-}}} |_{h_{i}}^{2} + \log c_1^{-1}$ on $X'_{i}$ in general.
One way is to apply the following property.
Let $U$ be a complex manifold and let $\vph$ be a psh function on $U$.
Let $A \subsetneq U$ be a closed analytic subset.
Then, for each $z \in U$,
$$
	\vph(z) = \varlimsup_{w \to z} \{ \vph(w); \ w \in U \sm A \}
$$
holds (see \cite{NO}, the second line in the proof of (3.3.41) Theorem, for example).

Moreover, thanks to the estimate $\psi_{i} \le \log | s_{\ru{D'_{-}}} |_{h_{i}}^{2} + \log c_1^{-1}$, we see that the Lelong number of $\psi_{i}$ at each point is greater than or equal to that of the current $[\,\ru{D'_{-}}\,]$ (namely the multiplicity of the divisor).
Thus we have in fact
$$
	 \frac{\ai}{2\pi} \Theta_{h_{i} e^{-\psi_{i}}} = \theta+dd^{c}\psi_{i}  
	 \ge  [\,\ru{D'_{-}}\,]
$$
as currents on $X'_{i}$.

We now claim that $\psi_{i}=\psi_{j}$ on $X'_{i}\cap X'_{j}$.
It is shown in \cite[3.4, Step 4]{CH} (from page 12, line 20 to page 13, line 2) that,
for every $y \in Y_{i}\cap Y_{j} \sm \Sigma_{3}$, $\psi_{i}|_{X'_{y}} = \psi_{j}|_{X'_{y}}$ holds.

(Our $(E'_{i},g'_{i})$ corresponds to $(L_{i}, h_{L_{i}})$ in \cite[p.11, line 6 from the bottom]{CH}.
The metric $h_{i}$ is the same \cite[3.4, Step 2]{CH}.
The Zariski open set $U_{i,0}\subset U_{i} (\subset Y)$ in \cite[p.11, the bottom line]{CH} is our $Y_{i} \sm \Sigma_{2} \subset Y_{i}$.
The section $\tau_{i,y}$ corresponds to $s_{y}(= s_{y,i})$ in \cite[p.12, line 1]{CH}.
$\psi_{i}$ correspond to $2\vph_{i}$ in \cite[p.12, line 8]{CH}.)

Thus we have $\psi_{i}=\psi_{j}$ on $(X'_{i}\cap X'_{j}) \sm {f'}^{-1}(\Sigma_{3})$.
As both $\psi_{i}$ and $\psi_{j}$ are quasi-psh on $X'_{i}\cap X'_{j}$, and as ${f'}^{-1}(\Sigma_{3})$ is pluri-polar in $X'$, in particular the Lebesgue measure zero, $\psi_{i}=\psi_{j}$ on $X'_{i}\cap X'_{j}$
(\cite[\S K, 15 Theorem, 16 Corollary]{Gu}).
When $y \in Y_{i}\cap Y_{j} \cap (\Sigma_{3}\sm \Sigma_{2})$, we have in particular $\psi_{i}|_{X'_{y}} \equiv -\infty \equiv \psi_{j}|_{X'_{y}}$.

Thus $\{\psi_{i}\}_{i}$ defines a quasi-psh function on $X'$, which we denote by 
$$
	\psi \in L^{1}_{\rm loc}(X',\BR).
$$

\smallskip
{\it Step 6. Metric with semi-positive curvature on $M_{Y}=L$}.
We first have a global singular Hermitian metric
$ e^{-\psi} h_{K'}h_{\Del_{D'}} h_{M'}$ on $K_{X'/Y}+\Del_{D'}+M'$ with the curvature
$$
	\theta + dd^{c}\psi \ge [\,\ru{D'_{-}}\,]
$$
on  $X'$.
For a line bundle $\CO_{X'}(\ru{D'_{-}})$, we have a canonical singular Hermitian metric 
$1/ |s_{\ru{D'_{-}}}|^{2} = h_{\ru{D'_{-}}} / |s_{\ru{D'_{-}}}|_{ h_{\ru{D'_{-}}} }^{2}$
for any Hermitian metric $h_{\ru{D'_{-}}}$ of $\CO_{X'}(\ru{D'_{-}})$.
Then we consider a singular Hermitian metric $\wtil h$ on 
$K_{X'/Y}+\Del_{D'}+M'-\ru{D'_{-}} \sim_{\BQ}{f'}^{*}L$ given by
$$
	\wtil h = e^{-\psi} h_{K'}h_{\Del_{D'}} h_{M'} |s_{\ru{D'_{-}}}|^{2}.
$$
The curvature current is 
$$
	\frac{\ai}{2\pi} \Theta_{\wtil h} =\theta + dd^{c}\psi - [\,\ru{D'_{-}}\,] 
	\ge 0 
$$
on $X'$.
On each fiber $X'_{y}$, we have $({f'}^{*}L)|_{X'_{y}} \sim_{\BQ} \CO_{X'_{y}}$, and hence the semi-positively curved singular Hermitian metric on $({f'}^{*}L)|_{X'_{y}}$ needs to be constant (possibly $+\infty$).
Thus, we can find a singular Hermitian metric $h_Y$ on $L$ with semi-positive curvature such that ${f'}^{*}h_Y=\wtil h$.
We usually write it as $h_{Y}=f'_{*} \wtil h$.
At each point $x' \in X'_{i} \sm {f'}^{-1}(\Sigma_{2})$, we have
$$ 
	\wtil h (x')
	= e^{-\psi_{i}(x')} h_{i}(x')  |s_{\ru{D'_{-}}}(x')|^{2}
	=  \frac{ F'_{i}(f'(x')) }{ |s_{\ru{D'_{-}}} (x')|_{h_{i}}^{2} } h_{i}(x')  |s_{\ru{D'_{-}}}(x')|^{2}
	= F'_{i}(f'(x')).
$$
Needless to say, the right hand side $F'_{i}(f'(x'))$ is constant on each fiber (it can be $\equiv +\infty$ on some fibers).
Thus $h_{Y}(y) = F'_{i}(y) (= \| \sg'_{y} \|_{g'_{i}}^{2})$ for every $y \in Y_{i}\sm \Sigma_{2}$ and every $i$.

\smallskip
{\it Step 7. Bounding the singularities of the metric:\ the nefness of $M_{Y}=L$.}
Let us recall that $\vph$ has a vanishing Lelong number everywhere on $X$.
Then, \ref{equiv} and \ref{L1orb}\,(1) imply that the Lelong number of the local weight functions of $h_{Y}: \nu(-\log h_{Y},y)=0$ for every $y \in Y$.
As a conclusion, the singular Hermitian metric $h_{Y}=f'_{*}\wtil h$ on $M_{Y}=L$ is ``nef'' by \ref{L0nef}.
\end{proof}

\begin{rem}
There are two points we used \ref{L1} (or \ref{L1orb}) in the proof of \ref{can sHm}.

(1) \ref{L1}\,(2) is used in Step 5.
This follows from the existence of a positive lower bound of $e^{-\vph}$ and a condition $\Del_{Y'}=0$, which is a generic condition on the special fiber.

(2) \ref{L1}\,(1) is used in Step 7.
We used $\Del_{Y'} =0$ and $\nu(\vph',x')=0$ for every $x'\in X'$.
\qed
\end{rem}

\begin{rem}
Up to Step 6 in the proof above, our arguments (on the proof of \ref{can sHm}\,(1)--(3)) also work under a weaker assumption that the singular Hermitian metric $e^{-\vph'} h_{M'}$ has semi-positive curvature and the multiplier ideal $\CI(g')=\CO_{X'}$ over a non-empty Zariski open subset $Y_{M}$ of $Y$; $\CI(g')=\CO_{X'}$  on ${f'}^{-1}(Y_{M})$.
\qed\end{rem}


\section{On finite generation problem of log canonical rings for generalized pairs}\label{fg-section}

In this section, we apply twisted Kawamata's semi-positivity theorem (Theorem \ref{tKsp}) for the finite generation problem for some versions of canonical rings. 

\begin{dfn}\label{defi:gp}
A \textit{generalized pair} $(X,B+M)$ consists of
\begin{itemize}
\item a normal projective variety $X$, 
\item an effective $\mathbb{R}$-divisor $B$ on $X$, and 
\item a b-$\mathbb{R}$-Cartier b-divisor over $X$ represented by some projective birational morphism $\varphi: X' \to X$ and 
a nef $\mathbb{R}$-Cartier divisor $M'$ on $X'$
\end{itemize}
such that $M = \varphi_* M'$ and $K_{X}+B+M$ is $\mathbb{R}$-Cartier. 
\end{dfn}

\begin{dfn}
\label{def:gen-lc}
Let $(X, B+M)$ be a generalized pair.
Let $Y\rightarrow X$ be a projective birational morphism.
We can write
\[
K_Y+B_Y+M_Y=\pi^*(K_X+B+M),
\]
for some divisor $B_Y$ on $Y$.
The {\em generalized log discrepancy}
of $(X,B+M)$ at a prime divisor $E\subset Y$, denoted by $a_E(X,B+M)$
is defined to be $1-{\rm coeff}_E(B_Y)$.

A generalized pair $(X, B+M)$ is said to be {\em generalized log canonical} (or glc for short) if all its generalized log discrepancies are non-negative.
A generalized pair $(X,B+M)$ is said to be {\em generalized Kawamata log terminal} (or gklt for short) if all its generalized log discrepancies are positive.
In the previous definitions, if the analogous statement holds for $M=0$, then we drop the word ``generalized".

\end{dfn}

First, we see the following lemma by \cite{bchm}.

\begin{lem}\label{fg-canonical-bchm}Let  $(X, B+M)$ be a projective generalized klt pair such that $K_X+B+M$ is big. Assume that $l(K_X+B+M)$ is a Cartier divisor for an $l>0$. 

Then the ring 
$$ \bigoplus_{m \geq 0} H^0(X, ml(K_X+B+M))  
$$
is a finite generated $\mathbb{C}$-algebra. 
\end{lem}

\begin{proof}Note that by \cite[Proposition 5.1]{fm}, it is enough to show the lemma for a sufficiently large and divisible $l$. Taking a log resolution, we may assume that $(X, B)$ is simple normal crossing klt pair and $M$ is nef. Since $K_X+B+M$ is big, we take an effective divisor $E$ and an ample divisor $A$ such that $l(K_X+B+M) \sim_{\mathbb{Z}}E+A$. Since $M+\frac{1}{l} A$ is ample, we can choose an effective $\mathbb{Q}$-divisor $A'$ such that $(X, B+\frac{1}{l}(E+A'))$ is klt and $A' \sim_{\mathbb{Z}} lM+A$ for  a sufficiently large and divisible positive integer $l$. Since $$2l(K_X+B+M) \sim_{\mathbb{Z}} l(K_X+B+\frac{1}{l}(E+A')),$$
it is enough to show the finite generation of $$ \bigoplus_{m \geq 0} H^0(X, ml(K_X+B+\frac{1}{l}(E+A'))),
$$
which follows from \cite[Corollary 1.1.2]{bchm}. 
\end{proof}

Using  twisted Kawamata's semi-positivity theorem \ref{tKsp}, we prove the following theorem:

\begin{thm}\label{fg-canonical}Let  $(X, B+M)$ be a projective generalized klt pair such that a $\mathbb{Q}$-line bundle  $M'$ on a smooth higher birational model $X' \to X$ has a semi-positive singular Hermitian metric $h_{M'}$ with a vanishing  Lelong number for the potential at every point of $X'$. Assume that $l(K_X+B+M)$ is a Cartier divisor for an $l>0$. 

Then the ring 
$$ \bigoplus_{m \geq 0} H^0(X, ml(K_X+B+M))  
$$
is a finite generated $\mathbb{C}$-algebra. 
\end{thm}

\begin{proof}When $\kappa(K_X+B+M)=0$ or $\kappa(K_X+B_M) <0$,  $$\bigoplus_{m \geq 0} H^0(X, ml(K_X+B+M))  \simeq \mathbb{C}[t] \text{ or } \mathbb{C},
$$
respectively, where  $\mathbb{C}[t]$ is the polynomial ring with one variable.
 Thus we may assume that $\kappa(X, l(K_X+B+M)) > 0$. We consider the Iitaka fibration $f:X \dashrightarrow Y$of $l(K_X+B+M)$ and take a higher birational model $X' \to X$ induces a holomorphic morphism $f': X'\to Y$ which is birational equivalent to $f$. Thus, by replacing, we may assume that $(X,B)$ is a simple normal crossing klt pair admitting the Iitaka fibration $f:X \to Y$ of $K_X+B+M$, and that $M$ has a semi-positive singular Hermitian metric  with a  vanishing  Lelong number for the potential at every point of $X$. 
 
 Now, we apply \ref{wp}:~the toroidalization theorem due to Abramovich and Karu \cite{AK} for $f$.
We have a birational morphisms $\varphi:X' \to X$ and $\psi: Y' \to Y$, and a well prepared birational model $f': (X', B') \to Y'$ of $f$ such that $M'=\varphi^*M$ and
$$
	\varphi^*(K_X+B+M)=K_{X'}+B'+M'.
$$ 
We use the notation in twisted Kawamata's semi-positivity theorem \ref{tKsp}. Consider the divisor $\Delta_{Y'} ^{-}$. Since $B$ is effective, any components of $f'^*\Delta_{Y'} ^{-}$ are $\varphi$-exceptional divisors. Thus, $f'$ is also the Iitaka fibarion of $K_{X'}+f'^*\Delta_{Y'} ^{-}+ B'+M'$. Therefore the finite generation of $R(K_X+ B+M)$ is reduced to that of $R(K_{X'}+f'^*\Delta_{Y'} ^{-}+ B'+M').$ 
And that is equivalent to the finite generation of $R(K_{Y'}+ M_{Y'}+ \Delta^+_{Y'})$. 
Now  $(Y' ,\Delta_{Y'}^+) $ is klt and  $M_{Y'}$ is nef by Theorem \ref{tKsp}. Since $K_{Y'}+ M_{Y'}+ \Delta^+_{Y'}$ is big, the desired finite generation follows from Lemma \ref{fg-canonical-bchm}. 
\end{proof}

The following is an interesting corollary for even smooth projective varieties.

\begin{cor}Let  $X$ be a smooth projective variety such that anti-canonical bundle $\omega_X^{-1}$ has a  semipositive singular  metrics $h$ with a vanishing  Lelong number for the potential at every point of $X$.
Then, the anti-canonical ring 
$$ \bigoplus_{m \geq 0} H^0(X, \omega_X^{-m})  
$$
is a finite generated $\mathbb{C}$-algebra. 
\end{cor}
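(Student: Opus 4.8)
The plan is to reduce the statement to Theorem \ref{fg-canonical} by presenting $\omega_X^{-1}$ as the generalized log canonical divisor of an appropriate generalized klt pair. Observe first that, since $X$ is smooth, $-K_X$ is Cartier, and by hypothesis $\omega_X^{-1}$ carries a semi-positive singular metric $h$ whose local weights have vanishing Lelong number at every point; hence $-K_X$ is nef by \ref{L0nef}, and so is $-2K_X$.

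I would then take the generalized pair $(X, B + M)$ with the chosen birational model being the identity $X' = X$, with $B = 0$, and with $M' = M = -2K_X$; this is a legitimate choice because $-2K_X$ is a nef Cartier divisor on the smooth variety $X$. For this pair $K_X + B + M = -K_X$ is Cartier, so the Cartier-index hypothesis of Theorem \ref{fg-canonical} holds with $l = 1$. To verify that $(X, B+M)$ is generalized klt, note that on any projective birational $\pi : Y \to X$ the associated b-divisor is the Cartier closure of $-2K_X$, so $M_Y = \pi^*(-2K_X)$; writing $K_Y = \pi^* K_X + \sum_i a_i E_i$ with $a_i \ge 1$ over the exceptional locus, the relation $K_Y + B_Y + M_Y = \pi^*(K_X + B + M) = \pi^*(-K_X)$ gives $B_Y = \pi^* K_X - K_Y = -\sum_i a_i E_i$. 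Hence every generalized log discrepancy of $(X, B+M)$ equals $1$ along a non-exceptional prime divisor and $1 + a_i \ge 2$ along $E_i$, so all are positive. Finally, the metric hypothesis of Theorem \ref{fg-canonical} is met on $X' = X$ by equipping $M' = -2K_X$ with $h_{M'} = h^{\otimes 2}$: its curvature current is twice that of $h$, hence semi-positive, and if $h = e^{-\varphi}$ locally then $\nu(2\varphi, x) = 2\nu(\varphi, x) = 0$ at every $x \in X$.

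With all hypotheses in place, Theorem \ref{fg-canonical} applies with $l = 1$ and yields that $\bigoplus_{m \ge 0} H^0(X, m(K_X+B+M)) = \bigoplus_{m \ge 0} H^0(X, -mK_X) = \bigoplus_{m \ge 0} H^0(X, \omega_X^{-m})$ is a finitely generated $\mathbb{C}$-algebra, which is the assertion. I do not anticipate a serious obstacle: the whole content is the elementary verification that $(X, 0 + (-2K_X))$ is generalized klt and that squaring the singular metric preserves semi-positivity and the vanishing of Lelong numbers; no further input beyond Theorem \ref{fg-canonical} is needed. In particular one does not need $-K_X$ to be big, since that case is already subsumed in Theorem \ref{fg-canonical} through its Iitaka-fibration argument and the semi-positivity statement of Theorem \ref{tKsp}.
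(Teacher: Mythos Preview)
Your proof is correct and is precisely the intended reduction: the paper states this corollary immediately after Theorem~\ref{fg-canonical} without proof, and the only natural way to deduce it is to take the generalized klt pair $(X,0+M)$ with $M'=-2K_X$ on $X'=X$ and $h_{M'}=h^{\otimes 2}$, exactly as you do. The verifications that $(X,0+M)$ is gklt (since $X$ is smooth, hence terminal) and that squaring $h$ preserves semi-positivity and vanishing Lelong numbers are straightforward, so nothing is missing.
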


In the last, we discuss the finite generation of generalized canonical rings for generalized klt pairs in a general situation. Indeed we prove that for lower dimensional cases as follows:

\begin{thm}\label{fg-canonical II}Let  $(X, B+M)$ be a projective generalized klt pair such that $\mathrm{dim}\, X \leq 3$. 
 Assume that $l(K_X+B+M)$ is a Cartier divisor for an $l>0$. 

Then the ring 
$$ \bigoplus_{m \geq 0} H^0(X, ml(K_X+B+M))  
$$
is a finite generated $\mathbb{C}$-algebra. 
\end{thm}

\begin{proof}If $\kappa(X, l(K_X+B+M)) \leq 0$, the finite generation is clear as the proof of Theorem \ref{fg-canonical}.  We may assume that $1 \leq \kappa(K_X+B+M)\leq 2$ by Lemma \ref{fg-canonical-bchm}. 
 The case of $\kappa(K_X+B+M)=1$ is known since the finite generation is always true for a section ring of a Cartier divisor with Iitaka dimension one (cf. \cite[Lemma 4.4]{W}).
  When  $\mathrm{dim}\, X=2$, we complete the proof by Lemma \ref{fg-canonical-bchm} again. Thus we may assume that $\mathrm{dim}\,X =3$ and  $\kappa(K_X+B+M)= 2$. 
 As the proof of Theorem \ref{fg-canonical}, by taking the Iitaka fibration and log resolution, we may assume that there exists a projective surjective morphism $f:X \to Y$ with connected fibers such that $(X, B+M)$ is generalized klt and $\mathrm{dim}\, Y=2$. For a general fiber $F$ of $f$, it holds that $\kappa((K_X+B+M)|_F)=0$ since the property of Iitaka fibrations. 
Since $F$ is a smooth projective curve, we see that $(K_X+B+M)|_F \sim_{\mathbb{Q}}0.$ Thus  we can apply  \cite[Theorem 2.20]{filis}.

  Then there exists $B_Y$ and $M_Y$ such that $(Y, B_Y+M_Y)$ is generalized klt and the finite generation for $K_X+B+M $ is equivalent for that for a big divisor $K_Y+B_Y+M_Y$ as the proof of Theorem \ref{fg-canonical}.  Since $K_Y+B_Y+M_Y$ is big, the desired finite generation follows from Lemma \ref{fg-canonical-bchm}. 
\end{proof}



\begin{thebibliography}{Am05}		
\bibitem[AK]{AK}
D.~ Abramovich and K.~ Karu, 
{\it Weak semistable reduction in characteristic 0},
Invent.~ Math.~ {\bf 139} (2000) 241--273.

\bibitem[Am04]{Ajdg}
F.~ Ambro, {\it Shokurov's boundary property},
J.~ Differential Geom.\ {\bf 67} (2004) 229--255.

\bibitem[Am05]{Acomp}
F.~ Ambro, {\it The moduli b-divisor of an lc-trivial fibration},
Compositio Math.~ {\bf 141} (2005) 385--403.

\bibitem[Be]{Be}
B.~ Berndtsson,
{\it The openness conjecture for plurisubharmonic functions}, arXiv:1305.5781.

\bibitem[BP08]{BP}
B.~ Berndtsson and M.~ P\u{a}un,
{\it Bergman kernels and the pseudoeffectivity of relative canonical bundles},
Duke Math.\ J.\ {\bf 145} (2008) 341--378.

\bibitem[BP10]{BP10}
B.~ Berndtsson and M.~ P\u{a}un,
{\it Bergman kernels and subadjunction},
arXiv:1002.4145v1 [math.AG].

\bibitem[BCHM]{bchm}
C.~Birkar, ~P. Cascini, C. D. ~Hacon, and J. ~$\mathrm{M^{c}}$Kernan, 
{\textit{Existence of minimal models for varieties of log general type}}, 
J. Amer. Math. Soc. {\bf{23}} (2010), 405--468.

\bibitem[CM]{CM}
D.~ Coman and G.~ Marinescu, {\it Convergence of Fubini-study currents for orbifold line bundles}. Internat.~ J.~ Math.~ {\bf 24} (2013), 1350051, 27 pp.


\bibitem[D92]{Dem92}
J.-P.~Demailly,  
\textit{Regularization of closed positive currents and intersection theory. }
J.~ Algebraic Geom.~ {\bf 1} (1992) 361--409. 

\bibitem[D12]{Dem}
J.-P.~Demailly,   
{\textit {Analytic methods in algebraic geometry}}, 
Surv.~ Mod.~ Math., {\bf 1},
International Press, Somerville, MA; Higher Education Press, Beijing, 2012. viii+231 pp.

\bibitem[CH]{CH}
J.~Cao and A.~H\"oring, {\it Rational curves on compact K\"ahler manifolds},
J.\ Differential Geom.\ {\bf 114} (2020) 1--39.

\bibitem[Fa]{Fa}
C.~Favre, {\it Note on pull-back and Lelong number of currents},
Bull.~Soc.~Math.~France {\bf 127} (1999) 445--458.

\bibitem[Fil]{filipazzi}
S.~Filipazzi, {\it On a generalized canonical bundle formula and generalized adjunction}, Ann.~ Sc.~ Norm.~ Super.~ Pisa Cl.~ Sci.~ (5) {\bf 21} (2020) 1187--1221. 

\bibitem[FiS]{filis}
S.~Filipazzi and R.~Svaldi,   {\it  On the connectedness principle and dual complexes for generalized pairs},
Forum Math. Sigma 11 (2023), Paper No. e33, 39 pp.

\bibitem[FF]{FF}
O.~ Fujino and T.~ Fujisawa, {\it On semipositivity theorems},
Math.~ Res.~ Lett.~ {\bf 26} (2019) 1359--1382.

\bibitem[FG]{FG}
O.~ Fujino and Y.~Gongyo, 
{\it On the moduli b-divisor of an lc-trivial fibrations},
Ann.~Inst.~ Fourier {\bf 64} (2014) 1721--1735.

\bibitem[FM]{fm}
O.~ Fujino and S.~ Mori, {\it A canonical bundle formula},  J.~ Differential Geom.~ {\bf 56}  (2000) 167--188.

\bibitem[G]{gon}
Y.~Gongyo, {\it On weak Fano varieties with log canonical singularities}, J.~ Reine Angew.~ Math.~ {\bf 665} (2012) 237--252.

\bibitem[GZ]{GZ}
Q.~ Guan and X.~ Zhou,
{\it A proof of Demailly's strong openness conjecture}, Ann.~ of Math.~{\bf 182} (2015) 605--616.

\bibitem[Gu]{Gu}
R.~Gunning, {\it Introduction to holomorphic functions of several variables},
Vol.~1. Wadwhorth and Brooks/Cole, Math.~ Ser.~
Wadsworth and Brooks/Cole Advanced Books and Software, Pacific Grove, CA, 1990. xx+203 pp.

  \bibitem[HP]{hp}
C. D.~ Hacon and M.~ P\u{a}un, {\it On the Canonical Bundle Formula and Adjunction for Generalized Kaehler Pairs}, arXiv:2404.12007 [math.AG].

\bibitem[HPS]{HPS} 
C. D. ~Hacon, M.~Popa and Ch.~Schnell,
{\it Algebraic fiber spaces over abelian varieties: around a recent theorem by Cao and P\u{a}un},
Local and global methods in algebraic geometry, 143--195, Contemp.~ Math., {\bf 712}, Amer.~ Math.~ Soc., Providence, RI, 2018.

\bibitem[K98]{Ksub}
Y.~ Kawamata, {\it Subadjunction of log canonical divisors, II},
Amer.\ J.\ Math.\ (1998) {\bf 120}, 893--899.

\bibitem[K15]{Ka15}
Y.~ Kawamata, {\it Variation of mixed Hodge structures and the positivity for algebraic fiber spaces}, ASPM 65 (2015), Algebraic Geometry in East-Asia, --- Taipei 2011, pp.\, 27--57.

\bibitem[Ki19]{Kim2}D.~ Kim,  {\it Canonical bundle formula and degenerating families of volume forms}, arXiv:1910.06917v4.

\bibitem[KM]{komo}
J.~Koll\'ar and S.~Mori, 
{\textit{Birational geometry of algebraic varieties}}, 
Cambridge Tracts in Math.,134 (1998).

\bibitem[MZ]{MZ}
X.~ Meng and X.~ Zhou, {\it On the restriction formula},
J.~ Geom.~ Anal.~ {\bf 33} (2023), Paper No.~ 369, 30 pp.

\bibitem[NO]{NO}
J.~ Noguchi and T.~ Ochiai, {\it Geometric function theory in several complex variables}, 
Translated from the Japanese by Noguchi,
Transl.~ Math.~ Monogr., {\bf 80}
American Mathematical Society, Providence, RI, 1990. xii+283 pp.

\bibitem[PT]{PT}
M.~ P\u{a}un and S.~ Takayama,
{\it Positivity of twisted relative pluricanonical bundles and their direct images},
J.~ Algebraic Geom.~ {\bf 27} (2018) 211--272.

\bibitem[Pe]{Pe}
Th.~Peternell, {\it Pseudoconvexity, the Levi problem and vanishing theorems},
Encyclopaedia Math.~ Sci., {\bf 74}, Springer-Verlag, Berlin, 1994, 221--257.

\bibitem[Ta]{Ta}
S.~ Takayama,
{\it Singularities of $L^{2}$-metric in the canonical bundle formula},
Amer.\ J.\ Math.\ {\bf 144} (2022), 1725--1743.

\bibitem[W]{W}
J.~Waldron, 
 {\it Finite generation of the log canonical ring for 3-folds in char $p$}, Math. Res. Lett. 24 (2017), no. 3, 933--946.

\bibitem[Xi]{Xi}
M.~ Xia,
{\it Analytic Bertini theorem},
Math.~ Z.~ {\bf 302} (2022), 1171--1176.
\end{thebibliography}
\end{document}